%%%%%%%%%%%%%%%%%%%%%%%%%%%%%%%%%%%%%%%%%%%%%%
%%
%%  Approximation of Random Slow Manifolds and Settling of Inertial
%% Particles under Uncertainty
%%
%%
%%  Version:  Jan 14, 2013
%%
%%%%%%%%%%%%%%%%%%%%%%%%%%%%%%%%%%%%%%%%%%%%%%

\documentclass[12pt]{article}
\usepackage{amssymb}
\usepackage{color, amsmath,amssymb, amsfonts, amstext,amsthm, latexsym}

\usepackage{amssymb, epsfig, amssymb, latexsym}
%%%%%%%%%%%%%%%%%%%%%%%%%%%%%%%%%%%%%%%%%%%%%%
%%%%%%%%%%%%%%%%%%%%%%%%%%%%%%%%%%%%%%%%%%%%%%
\usepackage{amsmath}
\usepackage{graphicx}
\usepackage{longtable}
\usepackage{graphicx}
\usepackage{subfigure}

\usepackage[colorlinks=false, linktocpage=true]{hyperref}

\numberwithin{equation}{section} \topmargin -0.4in

\oddsidemargin -0.4in
\textwidth 6.5in
\textheight 8in \lineskip 6pt
\parskip 0.15cm
\hoffset = 0.4 truecm

\renewcommand{\phi}{\varphi}

\newcommand{\R}{{\mathbb R}}

\newcommand{\eps}{\varepsilon}

\newtheorem{theorem}{Theorem}
\newtheorem{lemma}{Lemma}

\title{Approximation of Random Slow Manifolds and Settling of Inertial Particles under Uncertainty
\footnote{This work was partially supported by  the NSF grant
1025422, the  NSFC grants 11271290 and 11271013,
and the Office of Naval Research  under the grant N00014-12-1-0257.}}

\author{Jian Ren$^{1}$, Jinqiao Duan$^{1, 2}$ and Christopher K. R. T. Jones$^3$\\1. School of Mathematics and Statistics\\
Huazhong University of Science and Technology, Wuhan 430074,
China\\E-mail: renjian0371@gmail.com
\\2. Institute for Pure and Applied Mathematics, UCLA\\ Los Angeles, CA 90095, USA\\Email: jduan@ipam.ucla.edu\\ and\\
  Department of Applied Mathematics \\Illinois Institute of Technology
\\Chicago, IL 60616, USA\\Email: duan@iit.edu\\
3. Department of Mathematics\\
University of North Carolina-Chapel Hill\\ Chapel Hill, N.C.
27599-3250\\E-mail: ckrtj@amath.unc.edu }
\begin{document}
\date{\today}

\maketitle

\pagestyle{plain}

\begin{abstract}
A method is provided for approximating random slow manifolds of a
class of slow-fast stochastic dynamical systems. Thus  approximate,
low dimensional, reduced  slow systems are obtained  analytically in
the case of sufficiently large time scale separation. To illustrate
this dimension reduction procedure, the impact of random
environmental fluctuations on the settling motion  of inertial
particles in a cellular  flow field is examined. It is found that
noise   delays settling for some particles but enhances settling for
others. A deterministic stable manifold is an agent to facilitate
this phenomenon. Overall, noise appears to delay the settling in an
averaged sense.

%The impact is partially quantified  by computing first exit time
%and escape probability of inertial particles.

%highlighting an
%revealing some interesting settling and escaping features.

\medskip

 {\bf Key Words:} Random slow manifolds, dimension reduction, stochastic differential equations (SDEs),
 approximation under big scale-separation,   inertial particles in flows

{\bf Mathematics Subject Classifications (2010)}: 37H10,  37M99,    60H10

\end{abstract}

\section{Introduction}  \label{intro}
%%%%%%%%%%%%%%%%%%%%%%%%%%%%%%%%%%%%%%%%%%%%%%%%%%%%%%%%%%%%%%%%%

Complex dynamical systems in science and engineering often involve
multiple time scales, such as slow and fast time scales, as well as uncertainty caused by noisy fluctuations.
 For example, aerosol and pollutant particles, occur in various natural contexts
(e.g., in atmosphere and ocean coasts \cite{Beven, Clark, Haller})
and engineering systems (e.g. spray droplets), are described by
coupled system of differential equations. Some particles move fast
while others move slower, and they are usually subject to random
influences, due to molecular diffusion, environmental fluctuations,
or other small scale mechanisms that are not explicitly modeled
\cite{Schnoor}. Invariant manifolds are geometric structures in
state space that help describe dynamical behaviors of dynamical
systems.  A slow manifold is a special invariant manifold, with an
exponential attracting property and with the dimension the same as
the number of slow variables. The reduced system on a slow manifold thus
characterizes the long time dynamics in a lower dimensional setting, facilitating geometric and numerical investigation.

%Strong convergence for two time-scale stochastic
%partial differential equations(SPDE) have been investigated in
%\cite{Fu} where stochastic averaging principle have been used to
%find a strong converge solution of slow variable.

Existence for slow manifolds  of stochastic dynamical systems with slow-fast time scales  has been investigated recently \cite{Schm, FuLiuDuan}.
However, stochastic slow manifolds   are difficult to depict or visualize.
Therefore, in this paper, we approximate these random geometric invariant structures in the case of large time scale separation.
We derive an asymptotic approximation for these stochastic manifolds,
and illustrate the random slow manifold reduction by considering the motion of aerosol particles in a random cellular fluid flow.
The reduced slow system, being lower dimensional, facilitates our understanding of particle settling.

We comment that  approximations for individual solution paths (not a
stochastic slow manifold) for stochastic slow-fast systems have been
well investigated \cite{Freidlin, Gentz, Kabanov}. Approximations
for deterministic slow manifolds have also been considered
\cite{Jones, Rubin}.

This paper is organized as follows.
An approximation method for random slow manifolds is considered in \S \ref{slow888},
and the dynamics of aerosol particles in a random flow field is investigated in \S \ref{settle888}.

%%%%%%%%%%%%%%%%%%%%%%%%%%%%%%%%%%%%%%%%%%%%%%%%%%%%%%%%%%%%%%%
\section{Approximating random  slow manifolds and dimension reduction}
\label{slow888}

We first examine the  existence of a random slow manifold for a
slow-fast stochastic dynamical system,   then devise an
approximation method for this slow manifold, and thus obtain a low
dimensional, reduced system for the evolution of slow dynamics.

 We  consider the following slow-fast system  of stochastic differential equations (SDEs)
\begin{eqnarray}\label{SDE}
\begin{cases}
\dot{x} = Ax + f(x, y), \,\,\,\,\,   x\in \mathbb{R}^n,\\
\dot{y} = \frac{1}{\varepsilon} B y + \frac{1}{\varepsilon}g(x, y) +
\frac{\sigma}{\sqrt{\varepsilon}}\dot{W_t},\,\, y\in \mathbb{R}^m.
\end{cases}
\end{eqnarray}
Here  $A$ and $B$ are respectively $n\times n$ and $m\times m$
matrices.   The nonlinear functions
$~f:\mathbb{R}^n\times\mathbb{R}^m\rightarrow\mathbb{R}^n$  and
$~g:\mathbb{R}^n\times\mathbb{R}^m\rightarrow\mathbb{R}^m$  are
$C^1$-smooth and Lipschitz continuous with Lipschitz constants $L_f$
and $L_g$, respectively.  The parameter $\sigma$ is a positive
number and the parameter $\varepsilon>0$ is small (representing
scale separation). The stochastic process $\{W_t:t\in\mathbb{R}\}$
is a two-sided $\mathbb{R}^m$-valued Wiener process. When $f$ and
$g$ are locally Lipschitz but the system has a bounded (i.e., in
mean square norm) absorbing set, a useful trick is to cut-off  the
nonlinearities to zero outside the absorbing set, so that the new
system has global Lipscitz nonlinearities and   has the same long
time random dynamics as the original system. The existence of a
random slow manifold  for this system has been considered in
\cite{Schm} but we adopt a method from our earlier work
\cite{FuLiuDuan}.

We recall the definition of a random dynamical system (RDS) in
  a probability space $(\Omega, \mathcal{F}, \mathbb{P})$. Let
  $\theta=\{\theta_t\}_{t\in\mathbb{R}}$ be a $\mathcal{B}(\mathbb{R})\otimes\mathcal{F}$,
$\mathcal{F}-$measurable flow, i.e.,
$$\theta: \mathbb{R}\times\Omega\rightarrow\Omega, \quad
\theta_0=id_\Omega, \quad
\theta_{t_1}\circ\theta_{t_2}=\theta_{t_1+t_2},\quad \text{for}\;
t_1, t_2\in \mathbb{R}.$$ Additionally, the measure $\mathbb{P}$ is
supposed to be an invariant measure for $\theta_t$, i.e.
$\theta_t\mathbb{P}=\mathbb{P}$, for all $t\in \mathbb{R}$. For a
Wiener process driving   system, we take $\Omega=C_0(\mathbb{R},
\mathbb{R}^m)$ consisting of all continuous sample paths of
$\omega(t)$ on $\mathbb{R}$ with values in $\mathbb{R}^m$ and
$\omega(0)=0$. On $\Omega$, the flow $\theta_t$ is given by the
Wiener shift
$$\theta_t\omega(\cdot)=\omega(\cdot+t)-\omega(t),\;
\omega\in\Omega,\;\; t\in\mathbb{R}.$$
A measurable map $\varphi:
\mathbb{R}^+\times\Omega\times\mathbb{R}^{n+m}\rightarrow\mathbb{R}^{n+m}$
is said to satisfy the cocycle property  if
$$\varphi(0, \omega, x)=x,\; \varphi(t+s, \omega, x)=\varphi(t, \theta_s\omega, \varphi(s, \omega,
x)),
\quad \text{for}\; s, t\in \mathbb{R}^+,\; \omega\in\Omega\;
\text{and}\; x\in\mathbb{R}^{n+m}.$$
A random dynamical system consists of a driving system $\theta$ and
a measurable map with the cocycle property.

 Introduce a Banach Space $C_\lambda$ as our working space for random slow manifolds. For   $\lambda > 0$,    % satisfying $\lambda\eps<1$
 define  \\
$$C_\lambda ^ {1} = \big\{\nu:(-\infty, 0]\rightarrow R^n
 :\nu\; \text{ is continuous and } \sup\limits_{t\leq 0} |e^{\lambda t}\nu(t)|_{R^n}<
\infty\big\},$$and$$C_\lambda ^ {2} = \big\{\nu:(-\infty,
0]\rightarrow R^m
 :\nu\; \text{ is continuous and } \sup\limits_{t\leq 0} |e^{\lambda t}\nu(t)|_{R^m}<
\infty\big\},$$\\with the following norms respectively
\[|\nu(t)|_{C_\lambda^1} = \sup\limits_{t\leq 0} |e^{\lambda t}\nu(t)|_{R^n},
 \] and \[|\nu(t)|_{C_\lambda^2} = \sup\limits_{t\leq 0} |e^{\lambda t}\nu(t)|_{R^m}. \]% Where $E_{n\times n}$ and $E_{m\times m}$ are respectively $n-$dimension and $m-$dimension identity matrixes.
 Let $C_\lambda$ be the product Banach space $C_\lambda:=C_\lambda^1\times
 C_\lambda^2$, with the norm
 $$|(X, Y)|_{C_\lambda} = |X|_{C_\lambda^1} + |Y|_{C_\lambda^2}.$$

 For matrices $A$ and $B$, we make the following assumptions:

$\mathbf{H1}$: There are  constants $\alpha$, $\beta$  and $K$,
satisfying $-\beta<0\leq\alpha$ and $K>0$, such that for every $x\in
R^n$ and $y\in R^m$, the following exponential estimates hold:
 $$|e^{At}x|_{R^n} \leq K e^{\alpha
t}|x|_{R^n},\quad t\leq 0;\quad \quad |e^{Bt}y|_{R^m} \leq K
e^{-\beta t}|y|_{R^m},\quad t \geq 0.$$

$\mathbf{H2}$: $\beta>K L_g$.\\

In order to use the random invariant manifold framework \cite{Duan},
we transfer an SDE system into a random differential equation (RDE)
system. Introduce the following linear Langevin system
\begin{eqnarray}\label{linear-var}
   dy = \frac{B}{\varepsilon}y dt +
   \frac{\sigma}{\sqrt{\varepsilon}}dW_t.
\end{eqnarray}
It is   known \cite{Duan} that the following  process $\eta_\sigma^\varepsilon(\omega)$ is
the stationary solution of the linear system \eqref{linear-var}
$$\eta_\sigma^\varepsilon(\omega)=\frac{\sigma}{\sqrt{\varepsilon}}\int_{-\infty}^0
   e^{-\frac{B}{\varepsilon}s}\,dW_s\triangleq\sigma \eta^\varepsilon(\omega).$$
   Moreover,
$$\eta_\sigma^\varepsilon(\theta_t\omega)=\frac{\sigma}{\sqrt{\varepsilon}}\int_{-\infty}^t
   e^{\frac{B}{\varepsilon}(t-s)}\,dW_s\triangleq\sigma \eta^\varepsilon(\theta_t\omega).$$
Similarly, $\eta_\sigma(\omega)$ is the stationary solution of the
following linear SDE system
\begin{eqnarray}\label{linear}
 dy = By dt + \sigma \,dW_t,
\end{eqnarray}
with
$$\eta_\sigma(\omega)=\sigma\int_{-\infty}^0
   e^{-Bs}\,dW_s\triangleq\sigma\eta(\omega),$$ and
 $$\eta_\sigma(\theta_t\omega)=\sigma\int_{-\infty}^t
   e^{B(t-s)}\,dW_s\triangleq\sigma\eta(\theta_t\omega).$$
 Denoting
$~W_t(\psi_\varepsilon\omega)\triangleq
\frac{1}{\sqrt{\varepsilon}}W_{t\varepsilon}(\omega)$, which is also
a Wiener Process \cite{Wang}, and  has the same distribution as
$W_t(\omega)$, with $\psi_\varepsilon:\Omega\rightarrow\Omega$ .
Therefore, by a transformation $s'=s/\varepsilon$ at the second
equal sign and then omitting the prime in $s'$, we have
\begin{eqnarray}\label{linear-tvar-solu}
\eta^\varepsilon(\theta_{t\varepsilon}\omega) =
   \frac{1}{\sqrt{\varepsilon}}\int_{-\infty}^{t\varepsilon}
   e^{B(t-\frac{s}{\varepsilon})}\,dW_s =\int_{-\infty}^t
   e^{B(t-s)}\,d(\frac{1}{\sqrt{\varepsilon}}W_{s\varepsilon}(\omega)) =
   \eta(\theta_t\psi_\varepsilon\omega),
\end{eqnarray}
and
\begin{eqnarray}\label{linear-solu}
\eta^\varepsilon(\omega) =
   \frac{1}{\sqrt{\varepsilon}}\int_{-\infty}^{0}
   e^{-B\frac{s}{\varepsilon}}\,dW_s =\int_{-\infty}^0
   e^{-Bs}\,d(\frac{1}{\sqrt{\varepsilon}}W_{s\varepsilon}(\omega)) =
   \eta(\psi_\varepsilon\omega).
\end{eqnarray}
Moreover, by defining $s=u-t$ at the second equal sign below, we get
\begin{eqnarray}\label{linear-t-solu}
\eta^\eps(\theta_t\omega)=\frac1{\sqrt{\eps}}\int_{-\infty}^t
e^{\frac{B (t-u)}{\eps}}\,dW_u=\frac1{\sqrt{\eps}}\int_{-\infty}^0
e^{-\frac{B s}{\eps}}\,dW_s=\eta^\eps(\omega).
\end{eqnarray}
The equations \eqref{linear-tvar-solu} and \eqref{linear-solu}
indicate that $\eta^\varepsilon(\theta_{t\varepsilon}\omega)$ and
$\eta^\varepsilon(\omega)$ are identically distributed with
$\eta(\theta_t\psi_\varepsilon\omega)$ and
$\eta(\psi_\varepsilon\omega)$, respectively. And by
\eqref{linear-t-solu} and \eqref{linear-solu},
$\eta^\eps(\theta_t\omega)$ and $\eta(\psi_\varepsilon\omega)$ have
the same distribution.

We then introduce a random transformation
\begin{eqnarray}\label{random-transformation}
\begin{matrix}\begin{pmatrix}  X \\ Y \\\end{pmatrix}
:=\mathcal V_\varepsilon(\omega,x, y)=
\begin{pmatrix}  x \\y-\sigma\eta^\varepsilon(\omega)
\\ \end{pmatrix} \end{matrix},
\end{eqnarray}
where $(x, y)$ satisfies  system \eqref{SDE}.\\
Then the SDE system \eqref{SDE} is transferred into the following RDE
system,
\begin{eqnarray}\label{RDE}
\begin{cases}
\dot{X} = AX + f(X, Y+\sigma\eta^\varepsilon(\theta_t\omega)), \,\,\, X\in \mathbb{R}^n,\\
\dot{Y} = \frac{B}{\varepsilon}Y + \frac{1}{\varepsilon}g(X,
Y+\sigma\eta^\varepsilon(\theta_t\omega)),\,\,  Y \in \mathbb{R}^m.
\end{cases}
\end{eqnarray}
By   the variation of constants formula, this RDE system is further
rewritten as
\begin{eqnarray}
X(t)&=&e^{tA}X(0)+\int_0^te^{A(t-s)}f(X(s),
Y(s)+\sigma\eta^\varepsilon(\theta_s\omega))\,ds,\\
Y(t)&=&e^{B\frac{t-t'}{\varepsilon}}Y(t')+\frac{1}{\varepsilon}\int_{t'}^te^{B\frac{t-s}{\varepsilon}}g(X(s),
Y(s)+\sigma\eta^\varepsilon(\theta_s\omega))\,ds.
\end{eqnarray}
As $Y(\cdot)\in C_\lambda^2$, we have the following estimation,
$$| e^{B\frac{t-t'}{\varepsilon}}Y(t')|_{\mathbb{R}^m}\leq e^{\beta\frac{t'-t}{\varepsilon}}|Y(t')|_{\mathbb{R}^m}
=e^{\lambda
t'}|Y(t')|_{\mathbb{R}^m}e^{\frac{t'(\beta-\lambda\varepsilon)-t\beta}{\varepsilon}}\rightarrow
0,\quad t'\rightarrow -\infty,$$  for $\lambda$  satisfying
$\beta-\lambda\varepsilon>0$.\\ Letting $t'\rightarrow -\infty$, we
get the expression of the RDE system \eqref{RDE},
\begin{eqnarray}\label{RDE-solu-x}
X(t)&=&e^{tA}X(0)+\int_0^te^{A(t-s)}f(X(s),
Y(s)+\sigma\eta^\varepsilon(\theta_s\omega))\,ds,\\\label{RDE-solu-y}
Y(t)&=&\frac{1}{\varepsilon}\int_{-\infty}^te^{B\frac{t-s}{\varepsilon}}g(X(s),
Y(s)+\sigma\eta^\varepsilon(\theta_s\omega))\,ds.
\end{eqnarray}
 We rescale the time by letting $\tau=t/\varepsilon$, from
 system \eqref{RDE} and by \eqref{linear-tvar-solu} we get,
\begin{eqnarray}\label{RDE-var-x}
X'(\tau\varepsilon) &=& \varepsilon\big[AX(\tau\varepsilon) +
f(X(\tau\varepsilon),
Y(\tau\varepsilon)+\sigma\eta(\theta_\tau\psi_\varepsilon\omega))\big],
\,\,\,X\in \mathbb{R}^n,\\\label{RDE-var-y} Y'(\tau\varepsilon) &=&
BY + g(X(\tau\varepsilon),
Y(\tau\varepsilon)+\sigma\eta(\theta_\tau\psi_\varepsilon\omega)),\,\,
Y\in \mathbb{R}^m,
\end{eqnarray}
where $'=\frac{d}{d\tau}$.\\
We can rewrite these as the integral form below,
\begin{eqnarray}\label{RDE-var-solu-x}
X(\tau\varepsilon) &=&
X(0)+\varepsilon\int_0^\tau\big[AX(s\varepsilon)+f(X(s\varepsilon),
Y(s\varepsilon)+\sigma\eta(\theta_s\psi_\varepsilon\omega))\big]\,ds,\\
Y(\tau\varepsilon) &=& \int_{-\infty}^\tau e^{B(\tau-s)}
g(X(s\varepsilon),
Y(s\varepsilon)+\sigma\eta(\theta_s\psi_\varepsilon\omega))\,ds.
\end{eqnarray}

%%%%%%%%%%%%%%%%%%%%%%%%%%%%%%%%%%%%%%%
\subsection{Dimension reduction via a random slow manifold  }

We now recall some basic facts about random slow manifolds and dimension-reduced systems, when the scale separation is sufficiently large.

A random set $\mathcal{M}(\omega)=\{(x, h(x,
\omega))|x\in \mathbb{R}^n\}$ is
called a random slow manifold (a special random inertial manifold) for the system \eqref{SDE}, if it satisfies the following conditions \cite{Schm}:\\
$(i)$ $\mathcal{M}$ is invariant with respect to a random dynamical system $\phi$, i.e.
$$
\phi(t, \omega, \mathcal{M}(\omega))\subset
\mathcal{M}(\theta_t\omega)\quad for\quad t\geq 0,\quad \omega\in
\Omega.
$$
$(ii)$  $h(x, \omega)$ is globally Lipschitz in $x$ for all
$\omega\in\Omega$ and for any $x\in\mathbb{R}^{n}$ the mapping
$\omega\rightarrow~ h(x, \omega)$ is a random variable.\\
$(iii)$ The   distance of $\phi(t, \omega, z)$ and
$\mathcal{M}(\theta_t\omega)$ tends to $0$ with exponential rate,
for $z\in\mathbb{R}^{n+m}$, as $t$ tends to infinite.

%($\mathcal{M}(\omega)$ is called an Lipschitz random
%invariant manifold if $(1)$ and $(2)$ hold together.) \\

A random slow manifold $\mathcal{M}$, which is lower dimensional, retains the long time dynamics of the original system \eqref{SDE}, when $\varepsilon$ is sufficiently small \cite{FuLiuDuan}.

 In \cite{Schm}, a random Hadamard graph transform was used to
prove the existence of a random inertial manifolds, here we use
Lyapunov- Perron method to achieve our result as in
\cite{FuLiuDuan}.

\begin{lemma}[]  Assume that  $\mathbf{H1}$ and $\mathbf{H2}$ hold  and   that there exists a $\lambda$ such that $\beta-\lambda\varepsilon>0$.
Then, for sufficiently small $\varepsilon$,  there
exists a random slow manifold $\mathcal{\tilde
M}^\varepsilon(\omega)=(\xi,
\tilde h^\varepsilon(\xi, \omega))$ for the random slow-fast system \eqref{RDE}.
\end{lemma}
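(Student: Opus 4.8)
The plan is to realize the slow manifold as a graph over the slow variable via the classical Lyapunov--Perron construction, working in the Banach space $C_\lambda$ introduced above. Fix $\omega\in\Omega$ and $\xi\in\mathbb{R}^n$, and consider the set of all bounded (in the weighted sense) orbits $(X(\cdot),Y(\cdot))\in C_\lambda$ of the rescaled RDE system \eqref{RDE-var-solu-x}--\eqref{RDE-var-y} that are defined for all $\tau\le 0$ and satisfy $X(0)=\xi$. Concretely, I would define a map $\mathcal{J}=(\mathcal{J}_1,\mathcal{J}_2)$ on $C_\lambda$ by
\begin{eqnarray*}
\mathcal{J}_1(X,Y)(\tau\varepsilon) &=& \xi+\varepsilon\int_0^\tau\big[AX(s\varepsilon)+f(X(s\varepsilon),Y(s\varepsilon)+\sigma\eta(\theta_s\psi_\varepsilon\omega))\big]\,ds,\\
\mathcal{J}_2(X,Y)(\tau\varepsilon) &=& \int_{-\infty}^\tau e^{B(\tau-s)}g(X(s\varepsilon),Y(s\varepsilon)+\sigma\eta(\theta_s\psi_\varepsilon\omega))\,ds,
\end{eqnarray*}
so that fixed points of $\mathcal{J}$ are exactly the sought orbits. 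One then sets $\tilde h^\varepsilon(\xi,\omega):=Y(0)$ evaluated at the fixed point, and $\mathcal{\tilde M}^\varepsilon(\omega)=\{(\xi,\tilde h^\varepsilon(\xi,\omega)):\xi\in\mathbb{R}^n\}$.

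The key steps, in order, are: (1) check that $\mathcal{J}$ maps $C_\lambda$ into itself, using $\mathbf{H1}$ to bound $e^{At}$ and $e^{Bt}$ and the Lipschitz/linear growth of $f,g$, together with the fact that $\eta(\theta_s\psi_\varepsilon\omega)$ has at most subexponential growth as $s\to-\infty$ (a standard tempering property of the Ornstein--Uhlenbeck process) so the improper integral in $\mathcal{J}_2$ converges in the $C_\lambda^2$ norm; (2) show $\mathcal{J}$ is a contraction: the $Y$-component contributes a factor on the order of $KL_g/(\beta-\lambda\varepsilon)$, which by $\mathbf{H2}$ (i.e. $\beta>KL_g$) can be made $<1$ for suitable $\lambda$, while the $X$-component picks up an overall factor $\varepsilon$, hence is harmless for $\varepsilon$ small; (3) invoke the uniform contraction principle to get a unique fixed point depending measurably on $\omega$ and Lipschitz-continuously on $\xi$ (Lipschitz constant controlled by the contraction estimate), which gives properties $(ii)$ of the definition; (4) verify invariance $(i)$ by the usual argument that translating an orbit by $\theta_t$ and reparametrizing yields again an admissible orbit, so $\phi(t,\omega,\mathcal{\tilde M}^\varepsilon(\omega))\subset\mathcal{\tilde M}^\varepsilon(\theta_t\omega)$; (5) establish the exponential-tracking property $(iii)$ by comparing an arbitrary orbit with its projection onto the manifold and closing a Gronwall-type estimate in the $C_\lambda$ norm, again using $\mathbf{H1}$--$\mathbf{H2}$.

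I expect the main obstacle to be step (2) combined with the $\varepsilon$-dependence: one must choose $\lambda$ (with $0<\lambda<\beta/\varepsilon$, and crucially $\lambda$ should be taken in the ``gap'' so that $e^{\lambda t}$ decays slower than the $Y$-dynamics but the $X$-integral term stays controlled) so that \emph{both} contraction factors are simultaneously less than $1$ \emph{uniformly} as $\varepsilon\to 0$; this is exactly where the hypothesis $\beta-\lambda\varepsilon>0$ and the scale separation are used, and it requires some care to disentangle the $\tau$-scaled estimates for $X$ (which carry the explicit $\varepsilon$) from those for $Y$ (which do not). The measurability of $\omega\mapsto\tilde h^\varepsilon(\xi,\omega)$ is routine given joint continuity of the fixed-point map and measurability of $\omega\mapsto\eta(\psi_\varepsilon\omega)$, and the remaining estimates are the standard Lyapunov--Perron bookkeeping, so I would present them compactly rather than in full detail, referring to \cite{FuLiuDuan, Duan} for the parts that are verbatim analogous.
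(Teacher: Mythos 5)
Your proposal is essentially the paper's own argument: a Lyapunov--Perron fixed-point construction in the weighted space $C_\lambda$, with the $g$-contraction factor made smaller than $1$ via $\mathbf{H2}$ and a choice of $\lambda$ of order $(\beta-KL_g)/(2\varepsilon)$, the $f$-term then contributing only an $O(\varepsilon)$ factor, and $\tilde h^\varepsilon(\xi,\omega):=Y(0;\xi,\omega)$ defining the graph. The only (inessential) differences are that you phrase the map in the rescaled time variable rather than the paper's unrescaled form \eqref{RDE-solu-x}--\eqref{RDE-solu-y}, and that you sketch the invariance, tracking, and measurability steps which the paper, like you, delegates to \cite{FuLiuDuan}.
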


% where $\tilde h^s(\xi,
% \omega)$ can be represented as
% \begin{eqnarray*}
% \tilde{h}^s(\xi, \omega) &=& \int_{-\infty}^0 e^{-Bs}\Big(g(\xi,
% Y_0(s))+g_y(\xi,
% Y_0(s))\eta(\theta_s\psi_\varepsilon\omega)\Big)\,ds \\{}&&+
% \varepsilon\int_{-\infty}^0 e^{-Bs}\Big(g_x(\xi,
% Y_0(s))\big(As\xi+\int_0^s[f(\xi, Y_0(r))+f_y(\xi,
% Y_0(r))\eta(\theta_r\psi_\varepsilon\omega)]\,dr\big)\\{}&&\quad\quad\quad\quad\quad\quad\quad
% +g_y(\xi, Y_0(r))Y_1(s)\Big)\,ds.
% \end{eqnarray*}

 \begin{proof}
 This proof is adapted from \cite{FuLiuDuan} for our finite dimensional setting. For completeness, we include the essential part here.
 For a $\lambda>0$, we use the Banach Space $C_\lambda $ as defined in the beginning of this section.\\
Denote a nonlinear mapping
\begin{eqnarray*} \mathcal{T}\big((X, Y),X(0), \omega) =
\bigg\{ e^{tA}X(0)+\int_0^te^{A(t-s)}f(X(s),
Y(s)+\sigma\eta^\varepsilon(\theta_s\omega))\,ds,\\\frac{1}{\varepsilon}\int_{-\infty}^te^{B\frac{t-s}{\varepsilon}}g(X(s),
Y(s)+\sigma\eta^\varepsilon(\theta_s\omega))\,ds \bigg\}.
\end{eqnarray*}

Note that $\mathcal{T}$ is well-defined from
$\mathbb{R}^{n+m}\times
\mathbb{R}^n\times\Omega\rightarrow\mathbb{R}^{n+m}$. We will show
that for every   initial data $X(0)=\xi\in \mathbb{R}^n$,
\eqref{RDE} have a unique solution in $C_\lambda $. For   $(X,
Y)$, $(\bar{X},
\bar{Y})\in C_\lambda $, we have that\\
\begin{eqnarray*}
\lefteqn{\Big|\mathcal{T}\big((X, Y), \xi, \omega) -
\mathcal{T}\big((\bar{X}, \bar{Y}), \xi, \omega)\Big|_{C_\lambda }}\\
&=&\sup\limits_{t\leq 0}e^{\lambda
t}\Big|\int_0^te^{A(t-s)}\big[f(X(s),
Y(s)+\sigma\eta^\varepsilon(\theta_s\omega))-f(\bar{X}(s),
\bar{Y}(s)+\sigma\eta^\varepsilon(\theta_s\omega))\big]\,ds\Big|_{\mathbb{R}^n}\\{}&&+\sup\limits_{t\leq
0}e^{\lambda
t}\Big|\frac{1}{\varepsilon}\int_{-\infty}^te^{B\frac{t-s}{\varepsilon}}\big[g(X(s),
Y(s)+\sigma\eta^\varepsilon(\theta_s\omega))-g(\bar{X}(s),
\bar{Y}(s)+\sigma\eta^\varepsilon(\theta_s\omega))\big]\,ds\Big|_{\mathbb{R}^m}\\
&\leq& \Big|(X, Y)-(\bar{X}, \bar{Y})\Big|_{C_\lambda }
\sup\limits_{t\leq 0}\Big\{e^{(\lambda+\alpha)
t}KL_f\int_t^0e^{-(\lambda+\alpha) s}\,ds\Big\}\\{}&&+\Big|(X,
Y)-(\bar{X}, \bar{Y})\Big|_{C_\lambda } \sup\limits_{t\leq
0}\Big\{\frac{1}{\varepsilon}e^{(\lambda-\frac{\beta}{\varepsilon})
t}KL_g\int_{-\infty}^te^{(\frac{\beta}{\varepsilon}-\lambda)s}\,ds\Big\}\\
&\leq& \big(\frac{KL_f}{\alpha+\lambda} +
\frac{KL_g}{\beta-\varepsilon\lambda}\big)\Big|(X, Y)-(\bar{X},
\bar{Y})\Big|_{C_\lambda }.
\end{eqnarray*}
The first inequality is   by  $\mathbf{H1}$ and the Lipschitz
continuity of $f$ and $g$, while the second inequality comes from direct
calculation.   Taking $~\lambda=\frac{\beta-KL_g}{2\varepsilon}>0$,
which satisfies $\beta-~\lambda\varepsilon=~\frac{\beta+KL_g}{2}>0$,
we conclude that
\begin{eqnarray*}
\lefteqn{\Big|\mathcal{T}\big((X, Y), \xi, \omega\big) -
\mathcal{T}\big((\bar{X}, \bar{Y}), \xi,
\omega\big)\Big|_{C_\frac{\beta-KL_g}{2\varepsilon} }}\\ &\leq&
\big(\frac{2
KL_f\varepsilon}{2\varepsilon\alpha+\beta-KL_g}+\frac{2KL_g}{KL_g+\beta}\big)\Big|(X,
Y)-(\bar{X}, \bar{Y})\Big|_{C_\frac{\beta-KL_g}{2\varepsilon} }.
\end{eqnarray*}
By the assumption  $\mathbf{H2}$, $\frac{2KL_g}{KL_g+\beta}<1$, and
$\frac{2 KL_f\varepsilon}{2\varepsilon\alpha+\beta-KL_g}\rightarrow
0$ as $\varepsilon\rightarrow 0$. Therefore, for $\varepsilon$ small
enough, $\big(\frac{2K
L_f\varepsilon}{2\varepsilon\alpha+\beta-KL_g}+\frac{2KL_g}{KL_g+\beta}\big)<1$.
The contraction map theorem implies that for every $\xi\in
\mathbb{R}^n$, $\mathcal{T}\big((X, Y), \xi, \omega\big)$ has a
fixed point $(X(t), Y(t))\in C_\frac{\beta-KL_g}{2\varepsilon} $
which is the unique solution of the differential equation system
\eqref{RDE}. Moreover, the fixed point has the property
\begin{eqnarray}\label{x-x}
\nonumber\lefteqn{\Big|(X(\cdot; \xi, \omega), Y(\cdot; \xi,
\omega))-(X(\cdot; \bar\xi, \omega), Y(\cdot; \bar\xi,
\omega))\Big|_{C_\frac{\beta-KL_g}{2\varepsilon}
}}\\&\leq&\frac{K}{1-\big(\frac{2K
L_f\varepsilon}{2\varepsilon\alpha+\beta-KL_g}+\frac{2KL_g}{KL_g+\beta}\big)}\Big|\xi-\bar\xi\Big|_{\mathbb{R}^n}.
\end{eqnarray}
%Since $(X(\cdot; \xi, \omega), Y(\cdot; \xi, \omega))$ can be an
%$\omega$-wise limit of the iteration of contraction mapping
%$\mathcal{T}$ starting from a small enough \\
Denoting $\tilde h^\varepsilon(\xi, \omega)=Y(0, \xi, \omega)$,   we obtain
\begin{eqnarray}\label{smf_h^eps}
\tilde{h}^\varepsilon(\xi, \omega) =
\frac{1}{\varepsilon}\int_{-\infty}^0e^{-B\frac{s}{\varepsilon}}g(X(s),
Y(s)+\sigma\eta^\varepsilon(\theta_s\omega))\,ds, \quad \xi \in
\mathbb{R}^n.
\end{eqnarray}
With the help of inequality \eqref{x-x}, we   further have
$$\Big|\tilde{h}^\varepsilon(\xi, \omega)-\tilde{h}^\varepsilon(\bar\xi,
\omega)\Big|_{\mathbb{R}^m}\leq\frac{2K^2L_g}{KL_g+\beta}\cdot
\frac{\big|\xi-\bar\xi\big|_{\mathbb{R}^n}}{1-\big(\frac{2K
L_f\varepsilon}{2\varepsilon\alpha+\beta-KL_g}+\frac{2KL_g}{KL_g+\beta}\big)}.
$$
Thus, $\tilde h^\varepsilon$ is Lipschitz continuous. By the fact that $(X(0),
Y(0))\in \mathcal{\tilde M}^\varepsilon(\omega)$ if and only if
there exists $(X, Y)\in C_\lambda$ and satisfies \eqref{RDE-solu-x}
and \eqref{RDE-solu-y}, it follows that $(X(0), Y(0))\in
\mathcal{\tilde M}^\varepsilon(\omega)$ if and only if there exists
$\xi\in\mathbb{R}^n$ such that $(X(0), Y(0))=(\xi, \tilde
h^\varepsilon(\xi, \omega))$. Therefore, there exists a random slow manifold
$$\mathcal{\tilde M}^\varepsilon(\omega)=\{(\xi, \tilde h^\varepsilon(\xi,
\omega)\mid\xi\in\mathbb{R}^n\}.$$
\end{proof}

By the random transformation \eqref{random-transformation} and noting that    $\eta^\eps(\theta_t\omega)$ and
$\eta(\psi_\varepsilon\omega)$ have the same distribution,   the dynamics on the   slow manifold is now
described by the following dimension-reduced system in $\R^n$ (from
equation (\ref{SDE}) ), for $\varepsilon$ sufficiently small:
\begin{equation} \label{slowdynamics}
\dot \xi = A \xi + f(\xi, \sigma\eta(\psi_\varepsilon\omega)+\tilde
h^\varepsilon(\xi, \theta_t\omega) ), \;\; \xi\in\mathbb{R}^n.
\end{equation}

%%%%%%%%%%%%%%%%%%%%%%%%%%%%%%%%%%%%%%%%

\subsection{Approximation of a random slow manifold}

We now   approximate the slow manifolds for sufficiently small $\varepsilon$.
Expand   the solution of system \eqref{RDE-var-y} as
\begin{eqnarray}\label{Y(tvar)}
Y(\tau\varepsilon)=Y_0(\tau)+\varepsilon Y_1(\tau)+\varepsilon^2
Y_2(\tau)+\cdots,
\end{eqnarray}
and the initial conditions as
$$Y(0)=\tilde{h}^\varepsilon(\xi, \omega)=\tilde h^{(0)}(\xi, \omega)+\varepsilon \tilde{h}^{(1)}(\xi, \omega)+\cdots, $$
and $X(0)=\xi \in \mathbb{R}^n$.  With the help of
\eqref{RDE-var-solu-x} and \eqref{Y(tvar)}, we have the expansions
\begin{eqnarray*}
f(X(\tau\varepsilon),
Y(\tau\varepsilon)+\sigma\eta(\theta_\tau\psi_\varepsilon\omega))&=&f(\xi,
Y_0(\tau)+\sigma\eta(\theta_\tau\psi_\varepsilon\omega))+f_x(\xi,Y_0(\tau)+\sigma\eta(\theta_\tau\psi_\varepsilon\omega))\\{}&&\cdot\varepsilon\int_0^\tau\big[A
X(s\varepsilon)+f(X(s\varepsilon),
Y(s\varepsilon)+\sigma\eta(\theta_s\psi_\varepsilon\omega))\big]\,ds\\{}&&+f_y(\xi,
Y_0(\tau)+\sigma\eta(\theta_\tau\psi_\varepsilon\omega))\cdot\big[\varepsilon
Y_1(\tau)+\cdots\big]+\cdots\\&=&f(\xi,
Y_0(\tau)+\sigma\eta(\theta_\tau\psi_\varepsilon\omega))\\{}&&+f_x(\xi,
Y_0(\tau)+\sigma\eta(\theta_\tau\psi_\varepsilon\omega))\cdot
\varepsilon\int_0^\tau\big[A \xi+f(\xi,
Y_0(s)+\sigma\eta(\theta_s\psi_\varepsilon\omega))\big]\,ds\\{}&&+f_y(\xi,
Y_0(\tau)+\sigma\eta(\theta_\tau\psi_\varepsilon\omega))\cdot\varepsilon
Y_1(\tau)+\cdots,\\
g(X(\tau\varepsilon),
Y(\tau\varepsilon)+\sigma\eta(\theta_\tau\psi_\varepsilon\omega))&=&g(\xi,
Y_0(\tau)+\sigma\eta(\theta_\tau\psi_\varepsilon\omega))\\{}&&+g_x(\xi,
Y_0(\tau)+\sigma\eta(\theta_\tau\psi_\varepsilon\omega))\cdot
\varepsilon\int_0^\tau\big[A \xi+f(\xi,
Y_0(s)+\sigma\eta(\theta_s\psi_\varepsilon\omega))\big]\,ds\\{}&&+g_y(\xi,
Y_0(\tau)+\sigma\eta(\theta_\tau\psi_\varepsilon\omega))\cdot\varepsilon
Y_1(\tau)+\cdots.
\end{eqnarray*}
Inserting \eqref{Y(tvar)} into \eqref{RDE-var-y},  expanding
\eqref{RDE-var-y} and then matching the terms of the same power of
$\varepsilon$, we get
\begin{eqnarray}\label{Y_0}
\begin{cases}
Y'_0(\tau)=BY_0(\tau)+g(\xi,
Y_0(\tau)+\sigma\eta(\theta_\tau\psi_\varepsilon\omega)), \\
Y_0(0)=\tilde h^{(0)}(\xi, \omega),
\end{cases}
\end{eqnarray}
and
\begin{eqnarray}\label{Y_1}
\begin{cases}
Y'_1(\tau)=\big[B+g_y(\xi,
Y_0(\tau)+\sigma\eta(\theta_\tau\psi_\varepsilon\omega))\big]Y_1(\tau)\\
\quad\quad\quad\quad+g_x(\xi,
Y_0(\tau)+\sigma\eta(\theta_\tau\psi_\varepsilon\omega))\big\{A\tau\xi+\int_0^\tau
f(\xi, Y_0(s)+\sigma\eta(\theta_s\psi_\varepsilon\omega))\,ds\big\} , \\
Y_1(0)=\tilde h^{(1)}(\xi, \omega).
\end{cases}
\end{eqnarray}
Solving the two equations for $Y_0(\tau)$ and $Y_1(\tau)$, we obtain
\begin{eqnarray}\label{Y_0(t)}
Y_0(\tau)=e^{B\tau}\tilde h^{(0)}(\xi, \omega)+\int_0^\tau
e^{-B(s-\tau)}g(\xi,
Y_0(s)+\sigma\eta(\theta_s\psi_\varepsilon\omega))\,ds,
\end{eqnarray}
and
\begin{eqnarray}\nonumber
Y_1(\tau)&=&e^{B\tau+\int_0^\tau g_y(\xi,
Y_0(s)+\sigma\eta(\theta_s\psi_\varepsilon\omega))\,ds}\tilde
h^{(1)}(\xi, \omega)+\int_0^\tau e^{-B(s-\tau)+\int_s^\tau g_y(\xi,
Y_0(r)+\sigma\eta(\theta_r\psi_\varepsilon\omega))\,dr}\\{}&&\cdot
g_x(\xi,
Y_0(s)+\sigma\eta(\theta_\tau\psi_\varepsilon\omega))\big[As\xi+\int_0^s\big(f(\xi,
Y_0(r)+\sigma\eta(\theta_r\psi_\varepsilon\omega))\,dr\big]\,ds.\nonumber\\
\end{eqnarray}
With the help of \eqref{RDE-var-solu-x} and \eqref{Y(tvar)}, the
expression \eqref{smf_h^eps} can be calculated as follows
\begin{eqnarray*}
\tilde{h}^\varepsilon(\xi, \omega) &=&
\frac{1}{\varepsilon}\int_{-\infty}^0e^{-B\frac{s}{\varepsilon}}g(X(s),
Y(s)+\sigma\eta^\varepsilon(\theta_s\omega))\,ds \\
&=&\int_{-\infty}^0 e^{-Bs}g(X(s\varepsilon),
Y(s\varepsilon)+\sigma\eta(\theta_s\psi_\varepsilon\omega))\,ds \\
&=& \int_{-\infty}^0 e^{-Bs}\Big\{g(\xi,
Y_0(s)+\sigma\eta(\theta_s\psi_\varepsilon\omega))+g_x(\xi,
Y_0(s)+\sigma\eta(\theta_s\psi_\varepsilon\omega))\varepsilon\big[As\xi\\{}&&
+\int_0^s\big(f(\xi,
Y_0(r)+\sigma\eta(\theta_r\psi_\varepsilon\omega))\,dr\big]
+g_y(\xi, Y_0(s)+\sigma\eta(\theta_s\psi_\varepsilon\omega))\varepsilon Y_1(s)\Big\}\,ds+\mathcal{O}(\varepsilon^2)\\
&=& \int_{-\infty}^0 e^{-Bs}g(\xi,
Y_0(s)+\sigma\eta(\theta_s\psi_\varepsilon\omega))\,ds
\\{}&&+ \varepsilon\int_{-\infty}^0 e^{-Bs}\Big\{g_x(\xi,
Y_0(s)+\sigma\eta(\theta_s\psi_\varepsilon\omega))\big[As\xi+\int_0^s\big(f(\xi,
Y_0(r)+\sigma\eta(\theta_r\psi_\varepsilon\omega))\big)\,dr\big]\\{}&&\quad\quad\quad\quad\quad\quad
+g_y(\xi,
Y_0(s)+\sigma\eta(\theta_s\psi_\varepsilon\omega))Y_1(s)\Big\}\,ds+\mathcal{O}(\varepsilon^2).
\end{eqnarray*}
To get the second equation, we used $\tau=s/\varepsilon$ and then
used $s$ to replace $\tau$. Thus the zero and first order terms in
$\varepsilon$, of $\tilde h^\varepsilon$ in the random slow manifold
 $\mathcal{M}^\varepsilon(\omega)$ for \eqref{RDE},  are
respectively
\begin{eqnarray}\label{t-h^d}
\tilde h^{(0)}(\xi, \omega)=\int_{-\infty}^0 e^{-Bs}g(\xi,
Y_0(s)+\sigma\eta(\theta_s\psi_\varepsilon\omega))\,ds,
\end{eqnarray}
and
\begin{eqnarray}\label{t-h^(1)}
\tilde h^{(1)}(\xi, \omega)&=&\nonumber\int_{-\infty}^0
e^{-Bs}\Big\{g_x(\xi,
Y_0(s)+\sigma\eta(\theta_s\psi_\varepsilon\omega))\big[As\xi+\int_0^sf(\xi,
Y_0(r)+\sigma\eta(\theta_r\psi_\varepsilon\omega))\,dr\big]\\{}&&\quad\quad\quad\quad\quad\quad
+g_y(\xi,
Y_0(s)+\sigma\eta(\theta_s\psi_\varepsilon\omega))Y_1(s)\Big\}\,ds.
\end{eqnarray}
That is, the slow manifolds $\mathcal{\tilde
M}^\varepsilon(\omega)=\{(\xi, \tilde h^\varepsilon(\xi, \omega))\}$
of \eqref{RDE} up to the order $\mathcal {O}(\varepsilon^2)$ is
represented by
 $\tilde h^\varepsilon(\xi,
\omega)=\tilde h^{(0)}(\xi, \omega)+\varepsilon\tilde h^{(1)}(\xi,
\omega)+\mathcal{O}(\varepsilon^2)$. This produces an approximation
of the random slow manifold.

 Therefore, we have the following result.

\begin{theorem}[Approximation of a random slow manifold] \label{slow999}
 Assume that  $\mathbf{H1}$ and $\mathbf{H2}$ hold  and assume that there is a $\lambda$ such that $\beta-\lambda\varepsilon>0$.
 Then, for sufficiently small $\varepsilon$,  there
exists a  slow manifold  $\mathcal{\tilde M}^\varepsilon(\omega)
=\{(\xi, \tilde h^\varepsilon(\xi,\omega)\mid\xi\in\mathbb{R}^n\},$
where $\tilde h^\varepsilon(\xi,\omega)=~ \tilde h^{(0)}(\xi,
\omega)+~\varepsilon\tilde h^{(1)}(\xi, \omega)
+~\mathcal{O}(\varepsilon^2)$ with $\tilde h^{(0)}(\xi, \omega)$ and
$\tilde h^{(1)}(\xi, \omega)$  expressed in \eqref{t-h^d} and
\eqref{t-h^(1)}, respectively.
\end{theorem}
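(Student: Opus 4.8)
The existence of $\mathcal{\tilde M}^\varepsilon(\omega)$ is already furnished by the Lemma, so the only new content is the asymptotic expansion of its graph $\tilde h^\varepsilon(\xi,\omega)$, the identification of the coefficients $\tilde h^{(0)},\tilde h^{(1)}$, and the control of the remainder. The plan is to start from the fixed-point representation \eqref{smf_h^eps}, in which $\tilde h^\varepsilon(\xi,\omega)=Y(0;\xi,\omega)$ for $(X(\cdot),Y(\cdot))$ the unique fixed point of $\mathcal T$, and then pass to the rescaled time $\tau=t/\varepsilon$ so that $\varepsilon$ enters the integral equations \eqref{RDE-var-solu-x}--\eqref{RDE-var-y} explicitly as a bounded regular parameter multiplying smooth vector fields. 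The key structural observation is that in the $\tau$-variable the natural working space is $C_\mu$ with the \emph{fixed} weight $\mu=\frac{\beta-KL_g}{2}$ (this is $\lambda\varepsilon$ from the Lemma, which does not degenerate as $\varepsilon\to0$), and on this $\varepsilon$-independent space the map analogous to $\mathcal T$ remains a uniform contraction for all small $\varepsilon$ by the very estimate proved in the Lemma.

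First I would invoke the uniform-contraction-with-parameters principle (equivalently, the implicit function theorem in $C_\mu$): since the contraction constant is $<1$ uniformly in $\varepsilon$ and the dependence of the map on $\varepsilon$ is as smooth as $f,g$ permit, the fixed point $\tau\mapsto(X(\tau\varepsilon;\xi,\omega),Y(\tau\varepsilon;\xi,\omega))$ is differentiable in $\varepsilon$, with derivatives bounded uniformly in $\varepsilon$. Differentiating the rescaled integral equations at $\varepsilon=0$ — equivalently, inserting $X(\tau\varepsilon)=\xi+\varepsilon X_1(\tau)+\varepsilon^2 S_\varepsilon(\tau)$, $Y(\tau\varepsilon)=Y_0(\tau)+\varepsilon Y_1(\tau)+\varepsilon^2 R_\varepsilon(\tau)$ and matching powers of $\varepsilon$ — reproduces exactly the linear Cauchy problems \eqref{Y_0} and \eqref{Y_1}, whose solutions are \eqref{Y_0(t)} and the displayed formula for $Y_1(\tau)$. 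The pair $(S_\varepsilon,R_\varepsilon)$ solves a Lyapunov--Perron-type integral equation of the same form, and the contraction estimate of the Lemma together with $\mathbf{H1}$ gives $\|(S_\varepsilon,R_\varepsilon)\|_{C_\mu}\le C$ with $C$ independent of $\varepsilon$.

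Substituting this decomposition into \eqref{smf_h^eps}, changing variables $s\mapsto s/\varepsilon$, evaluating at $\tau=0$, and Taylor-expanding $g$ about $(\xi,Y_0(s)+\sigma\eta(\theta_s\psi_\varepsilon\omega))$ yields
\[
\tilde h^\varepsilon(\xi,\omega)=\int_{-\infty}^{0}e^{-Bs}g\big(\xi,Y_0(s)+\sigma\eta(\theta_s\psi_\varepsilon\omega)\big)\,ds+\varepsilon\int_{-\infty}^{0}e^{-Bs}\Big\{g_x\big[As\xi+\!\int_0^s\! f\,dr\big]+g_y\,Y_1(s)\Big\}\,ds+\mathcal O(\varepsilon^2),
\]
which is precisely $\tilde h^{(0)}(\xi,\omega)+\varepsilon\tilde h^{(1)}(\xi,\omega)+\mathcal O(\varepsilon^2)$ with $\tilde h^{(0)},\tilde h^{(1)}$ given by \eqref{t-h^d} and \eqref{t-h^(1)}. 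The convergence of these improper integrals and the uniformity of the $\mathcal O(\varepsilon^2)$ bound come from the exponential factor $e^{-Bs}$, controlled via $\mathbf{H1}$ (namely $|e^{-Bs}|\le Ke^{\beta s}$ for $s\le0$), the exponential weight built into $C_\mu$, and $\mathbf{H2}$ keeping $\beta$ dominant over the Lipschitz growth of $g$.

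The main obstacle I expect is not the matching-of-powers bookkeeping (already carried out before the theorem) but the uniform-in-$\varepsilon$ control of the remainder $(S_\varepsilon,R_\varepsilon)$: one must verify that no constant blows up as $\varepsilon\to0$, which is where recasting everything in the $\varepsilon$-independent weight $\mu$ rather than $\lambda$ is essential, and one must handle the fact that the random coefficient $\eta(\theta_\tau\psi_\varepsilon\omega)$ itself carries $\varepsilon$ through $\psi_\varepsilon$ — here the identity $\eta^\varepsilon(\theta_t\omega)=\eta^\varepsilon(\omega)$ in \eqref{linear-t-solu} and the distributional equalities recorded after it are used to strip the $\varepsilon$ out cleanly and to phrase the conclusion correctly (pathwise after the transformation, distributionally for \eqref{SDE}). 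A secondary point to flag or remedy is a regularity gap: a genuine second-order Taylor remainder needs $f,g\in C^2$, whereas only $C^1$ is assumed; with $C^1$ one replaces the $C^2$ bound by a modulus-of-continuity estimate on $f_x,f_y,g_x,g_y$, or reads the expansion in the formal/asymptotic sense.
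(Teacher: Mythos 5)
Your core derivation coincides with the paper's: existence of the manifold is taken from the Lemma, then the expansion $Y(\tau\varepsilon)=Y_0(\tau)+\varepsilon Y_1(\tau)+\cdots$ is inserted into the rescaled equation \eqref{RDE-var-y}, powers of $\varepsilon$ are matched to obtain \eqref{Y_0} and \eqref{Y_1}, and the result is substituted into the fixed-point formula \eqref{smf_h^eps} with the change of variables $s\mapsto s/\varepsilon$ and a first-order Taylor expansion of $g$, which yields exactly \eqref{t-h^d} and \eqref{t-h^(1)}. Where you go beyond the paper is the remainder: the paper treats the $\mathcal{O}(\varepsilon^2)$ term purely formally (the matching computation before the theorem is the entire argument), whereas you sketch a justification via the uniform contraction in the $\varepsilon$-independent weighted space with weight $\mu=\tfrac{\beta-KL_g}{2}=\lambda\varepsilon$ and differentiability of the fixed point in $\varepsilon$; that is a sensible rigorization, and the contraction constant from the Lemma is indeed uniform for small $\varepsilon$, so the scheme is workable. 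Your two caveats are genuine issues the paper glosses over: (i) the coefficients \eqref{t-h^d}--\eqref{t-h^(1)} still carry $\varepsilon$ through $\eta(\theta_s\psi_\varepsilon\omega)$, so the expansion is pathwise only after the scaling transformation and must be read distributionally for the original system \eqref{SDE}, exactly as you phrase it; (ii) $f,g\in C^1$ does not suffice for a controlled $\mathcal{O}(\varepsilon^2)$ Taylor remainder --- one needs $C^2$ (or Lipschitz first derivatives), or else the statement must be understood as a formal asymptotic expansion, which is in effect how the paper uses it.
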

With the approximated random slow manifold
\begin{equation}
 \hat{h}^\varepsilon(\xi,\omega)=~ \tilde h^{(0)}(\xi,
\omega)+~\varepsilon\tilde h^{(1)}(\xi, \omega),
\end{equation}
we obtain
the following dimension-reduced approximate random system in $\R^n$ (from equation (\ref{slowdynamics}) ), for $\varepsilon$ sufficiently small:
\begin{equation} \label{slowdynamics2}
\dot \xi = A \xi + f(\xi, \sigma\eta(\psi_\varepsilon\omega)+
\hat{h}^\varepsilon(\xi, \theta_t\omega) ), \;\; \xi\in\mathbb{R}^n.
\end{equation}

\section{Settling of inertial particles under random influences}
\label{settle888}

For the motion of aerosol particles in a cellular flow field,
Stommel once observed that, ignoring particle inertial ($\eps=0$),
some particles follow closed paths and are permanently suspended in
the flow. Rubin, Jones and Maxey
 \cite{Rubin} showed that   any small amount inertial (small $\eps>0$) will cause almost all particles to settle.
Via a singular perturbation theory \cite{Jones}, Jones showed the
existence of an attracting slow manifold.  By analyzing the
equations of motion on the slow manifold, especially heteroclinic
orbits, they established the presence of mechanisms that inhibit
trapping and enhance settling of particles.

Let us now  examine  the motion of aerosol particles in a
 random cellular flow field, using the slow manifold reduction technique developed in the previous section.

% We can see in \cite{Schm} that the slow manifold reduced
%system $\mathcal{M}^\varepsilon(\omega)$ for \eqref{SDE}, up to
%errors of
%$\mathcal {O}(\varepsilon^2)$ is \\
%\[\mathcal{M}^\varepsilon(\omega) =(\xi, h^\varepsilon(\xi, \omega)) =(\xi, h^d(\xi)
%+\varepsilon h^{(1)}(\xi, \omega) + \mathcal{O}(\varepsilon^2))= \mathcal V_\varepsilon^{-1}( \cdot , \mathcal{\tilde
%M}^\varepsilon(\omega)), \] where
%\begin{eqnarray*}
%h^s(\xi,\omega)&=& \tilde h^s(\xi, \omega)+
%\eta^\varepsilon(\omega) \\
% &=&\eta(\psi_\varepsilon\omega)+\int_{-\infty}^0
%e^{-Bs}\big[g(\xi, Y_0(s))+g_y(\xi,
%Y_0(s))\eta(\theta_s\psi_\varepsilon\omega)\big]\,ds, \\{}&&+
%\varepsilon\int_{-\infty}^0 e^{-Bs}\Big\{g_x(\xi,
%Y_0(s))\big[As\xi+\int_0^s\big(f(\xi, Y_0(r))+f_y(\xi,
%Y_0(r))\eta(\theta_r\psi_\varepsilon\omega)\big)\,dr\big]\\{}&&\quad\quad\quad\quad\quad\quad
%+g_y(\xi, Y_0(r))Y_1(s)\Big\}\,ds..
%\end{eqnarray*}
%i.e.
%\begin{eqnarray}\label{h^d}
%h^d(\xi, \omega)=\eta(\psi_\varepsilon\omega)+\int_{-\infty}^0
%e^{-Bs}\big[g(\xi, Y_0(s))+g_y(\xi,
%Y_0(s))\eta(\theta_s\psi_\varepsilon\omega)\big]\,ds,
%\end{eqnarray}
%and
%\begin{eqnarray}\label{h^(1)}
%h^{(1)}(\xi, \omega)=\tilde h^{(1)}(\xi,\omega)
%&=&\nonumber\int_{-\infty}^0 e^{-Bs}\Big\{g_x(\xi,
%Y_0(s))\big[As\xi+\int_0^s\big(f(\xi, Y_0(r))+f_y(\xi,
%Y_0(r))\eta(\theta_r\psi_\varepsilon\omega)\big)\,dr\big]\\{}&&\quad\quad\quad\quad\quad\quad
%+g_y(\xi, Y_0(r))Y_1(s)\Big\}\,ds..
%\end{eqnarray}

Consider a model for the motion of  aerosol particles  in a cellular flow field, under random environmental influences \cite{Rubin}
\begin{equation}\label{iner-sto-slow}
\begin{cases}
    \dot{y}_1 =  v_1, \\
    \dot{y}_2 =  v_2, \\
    \dot{v}_1 =  -\frac{1}{\varepsilon} v_1 + \frac{1}{\varepsilon} a \sin y_1 \cos y_2 +  \frac{\sigma}{\sqrt{\eps}} \dot{W}_t^1,\\
    \dot{v}_2 = -\frac{1}{\varepsilon} v_2 + \frac{1}{\varepsilon}(V- a \cos y_1\sin y_2) +  \frac{\sigma}{\sqrt{\eps}} \dot{W}_t^2,
\end{cases}
\end{equation}
where $(y_1, y_2)$ and $(v_1, v_2)$ are position  and velocity,
respectively, of a particle in the horizontal- vertical plane
(positive $y_2$ axis points   to the  settling/gravitational
direction), $a$ is a velocity scale, and $V$ is the settling
velocity in still fluid. Moreover, $W_t^1, W_t^2$ are independent
scalar Wiener processes,
  $\sigma$ is a  positive parameter, and  $\varepsilon$ is the inertial response time scale of the particle.
  Note that $(a \sin y_1 \cos y_2, -a \cos y_1\sin y_2)$ is the so-called cellular flow
  field velocity components (horizontal and vertical)
  on the domain (a `cell') $D\triangleq (0, \pi)\times (0, \pi)$.

  As in Section \ref{slow888}, this four dimensional SDE system can be converted to the following   RDE system
\begin{eqnarray}\label{iner-RDE}
\begin{cases}
    \dot{y}_1 =  v_1 + \sigma\eta_1^\varepsilon(\theta_t\omega), \\
    \dot{y}_2 =  v_2 + \sigma\eta_2^\varepsilon(\theta_t\omega), \\
    \dot{v}_1 = -\frac{1}{\varepsilon} v_1 + \frac{1}{\varepsilon} a \sin y_1 \cos y_2 ,\\
    \dot{v}_2 = -\frac{1}{\varepsilon} v_2 + \frac{1}{\varepsilon}(V- a \cos y_1 \sin y_2),
\end{cases}
\end{eqnarray}
where
$$\eta_1^\varepsilon(\theta_t\omega)=\frac{1}{\sqrt{\varepsilon}}\int_{-\infty}^t
   e^{\frac{-1}{\varepsilon}(t-s)}\,dW_s^1, \quad\quad
   \eta_2^\varepsilon(\theta_t\omega)=\frac{1}{\sqrt{\varepsilon}}\int_{-\infty}^t
   e^{\frac{-1}{\varepsilon}(t-s)}\,dW_s^2.$$
 Denoting $y_1(0)=\xi_1$ and $y_2(0)=\xi_2$, and we examine the motion of  the particle  $(\xi_1, \xi_2)$. By using
\eqref{t-h^d} and \eqref{Y_0(t)}, we get
$$\tilde h^{(0)}_1(\xi, \omega)=\int_{-\infty}^0 e^s a \sin \xi_1 \cos \xi_2\,ds
=a \sin \xi_1 \cos \xi_2, \quad \tilde h^{(0)}_2(\xi, \omega)=V-a
\cos \xi_1 \sin \xi_2,$$ and $$v_{10}(t)=a \sin \xi_1 \cos \xi_2,
\quad v_{20}(t)=V-a \cos \xi_1 \sin \xi_2.$$ Owing to
\eqref{t-h^(1)},
\begin{eqnarray*}
\tilde h_1^{(1)}(\xi_1,\xi_2,\omega) &=&\int_{-\infty}^0 e^s\big[a^2
s\sin \xi_1 \cos \xi_1 -a V s \sin \xi_1 \sin \xi_2  +a  \cos \xi_1
\cos \xi_2\,
 \sigma\int_0^s\eta_1(\theta_r\psi_\varepsilon\omega)\,dr \\
&&{}\quad -a \sin \xi_1 \sin \xi_2\,
  \sigma\int_0^s\eta_2(\theta_r\psi_\varepsilon\omega)\,dr\big]\,ds\\{}&=&
-a^2 \sin \xi_1 \cos \xi_1 +a V  \sin \xi_1 \sin \xi_2 + a \sigma
\cos \xi_1 \cos \xi_2  \int_{-\infty}^0 s e^s \,dW_s^1 \\&&{} -a
\sigma \sin \xi_1
\sin \xi_2  \int_{-\infty}^0 s e^s \,dW_s^2,\\
\tilde h_2^{(1)}(\xi_1,\xi_2,\omega) &=& -a^2 \sin \xi_2 \cos \xi_2
+a V \cos \xi_1 \cos \xi_2 + a \sigma \sin \xi_1 \sin \xi_2
\int_{-\infty}^0 s e^s \,dW_s^1 \\&&{} -a \sigma\cos \xi_1 \cos
\xi_2 \int_{-\infty}^0 s e^s \,dW_s^2.
\end{eqnarray*}
Therefore, from (\ref{slowdynamics}),  the dynamics on the random slow manifold  is described by the following dimension-reduced system:
\begin{eqnarray}\label{reduced1}
\dot\xi_1 \nonumber&=& \tilde h_1(\xi_1,\xi_2,\omega)
+\sigma\eta_1(\psi_\varepsilon\omega)
\\{}&=& \sigma\int_{-\infty}^0
   e^{s}\,dW_s^1(\psi_\varepsilon \omega)+a
\sin\xi_1 \cos\xi_2 +
  \varepsilon\big\{-a^2 \sin\xi_1 \cos\xi_1 +a V \sin\xi_1 \sin\xi_2 \nonumber\\&&+ a \sigma
\cos\xi_1 \cos\xi_2  \int_{-\infty}^0 s e^s \,dW_s^1 -a
\sigma\sin\xi_1 \sin\xi_2  \int_{-\infty}^0 s e^s \,dW_s^2\big\},
\end{eqnarray}
\begin{eqnarray}\label{reduced2}
\dot\xi_2 \nonumber &=& \tilde
h_2(\xi_1,\xi_2,\omega)+\sigma\eta_2(\psi_\varepsilon\omega)\\{}&=&\sigma\int_{-\infty}^0
   e^{s}\,dW_s^2(\psi_\varepsilon \omega)+ V-a
\cos\xi_1 \sin\xi_2 + \varepsilon \big\{ -a^2 \sin\xi_2 \cos\xi_2 +a
V \cos\xi_1 \cos\xi_2 \nonumber\\&&+ a \sigma \sin\xi_1 \sin\xi_2
 \int_{-\infty}^0 s e^s \,dW_s^1
 -a \sigma\cos\xi_1 \cos\xi_2  \int_{-\infty}^0 s e^s \,dW_s^2
\big\}.
\end{eqnarray}

\bigskip

\subsection{Numerical simulation: First exit time and escape probability}

Note that $(\xi_1, \xi_2)$ is the particle position. For random slow
manifold reduction, it is customary to use a notation different from
the original one $(y_1, y_2)$. The positive $\xi_2$ direction points
toward the bottom of the fluid.

In this section, we conduct numerical simulations for this reduced
or slow system \eqref{reduced1}-\eqref{reduced2}. When
$\varepsilon=0$, $\sigma=0$, this reduced system  becomes the
classical system for the motion of particles in the cellular flow.
When $\varepsilon \neq 0$, $\sigma=0$ indicates no noise, while a
non-zero $\sigma$ means   noise is present.

\paragraph{Simulation}
Motivated by understanding the settling of particles as in
\cite{Rubin}, we first calculate first exit time of particles,
described by the random system \eqref{reduced1}-\eqref{reduced2},
from the domain $D \triangleq (0, \pi)\times (0, \pi)$ and then
examine how particles, exit or escape the fluid domain $D$. To this
end, we introduce two concepts: First exit time and escape
probability. The first exit time is the time when a particle,
initially at $(\xi_1, \xi_2)\in D$,  first exits the domain $D$:
\begin{equation*}
\tau(\xi_1, \xi_2) \triangleq \inf\{t: \; (\xi_1(t), \xi_2(t))
\notin D \}.
\end{equation*}
Let $\Gamma \subset \partial D$ be a subboundary.  The escape
probability  $P_{\Gamma}(\xi_1, \xi_2)$, for a particle initially at
$(\xi_1, \xi_2)$,  through a subboundary  $\Gamma$, is the
likelihood that this particle first escapes the domain $D$ by
passing through $\Gamma$. We will take $\Gamma$ to be one of the
four sides of the fluid domain $D$. %When the first exit time is extremely large, we think this particle has long residence time inside $D$ or is `trapped'.
The escape probability of a particle through the top side
$\Gamma=\{\xi_2=\pi\}$ means the likelihood that this particle
settles directly to the bottom of the fluid (note that the positive
$\xi_2$ direction points to the bottom of the fluid).

To compute the first exit time from the domain $D$,   we place
particles on a lattice of grid points in  $D$ and on its boundary,
and set a large enough threshold time $T$. As soon as a particle
reaches boundary of $D$, it is regarded as `having exited' from $D$.
If a particle leaves $D$ before $T$, then the time of leaving is
taken as the first exit time, but if it is still in the domain at
time $T$, we take $T$ as the first exit time. When a particle's
first exit time is $T$, we can see it as trapped in the cell.
%To understand the effect of the noise, we compare the first exit time between the
%cases with   and  without noise.

In order to calculate the escape probability of a particle under
noise through a subboundary $\Gamma$, one of the four sides of the
domain, we calculate a large number, $N$, of paths for each particle
to see how many (say $M$) of them exit through the subboundary
$\Gamma$, and then we get the escape probability $\frac{M}{N}$. We
do this for particles placed on a lattice of grid points in $D$ and
on its boundary. When a particle reaches or is on a side
subboundary, it is regarded as `having escaped through' that part of
the boundary.

\begin{figure}\center
\includegraphics[height=6cm]{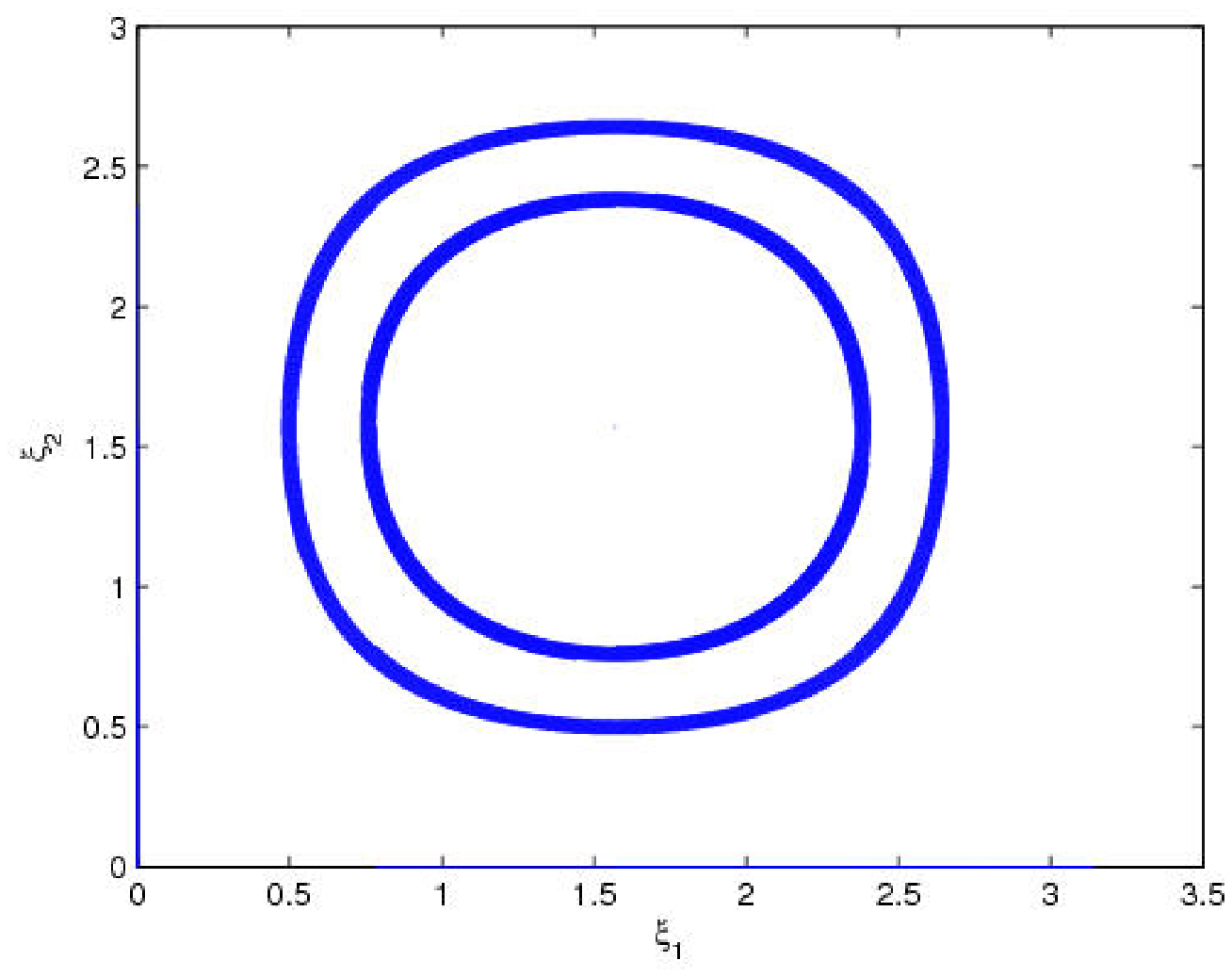}\\
\includegraphics[height=6cm]{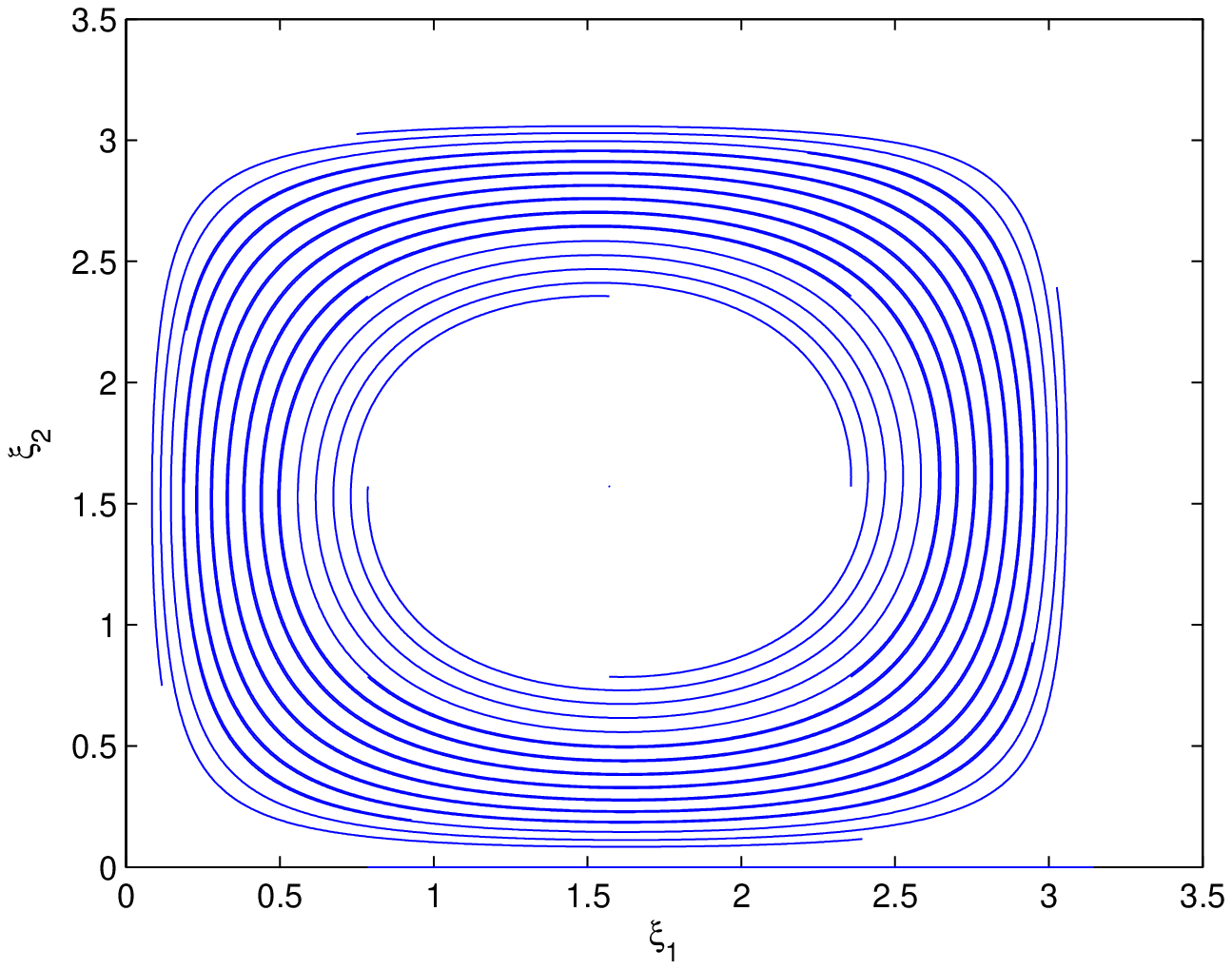}
\caption{Phase portraits of the slow system
\eqref{reduced1}-\eqref{reduced2} with $V=0$,   $a=0.7$ and $\sigma=0$ (no noise):
$\varepsilon=0$  (top); $\varepsilon=0.05$  (bottom).} \label{w0}
\end{figure}

\begin{figure}\center
\includegraphics[height=6cm]{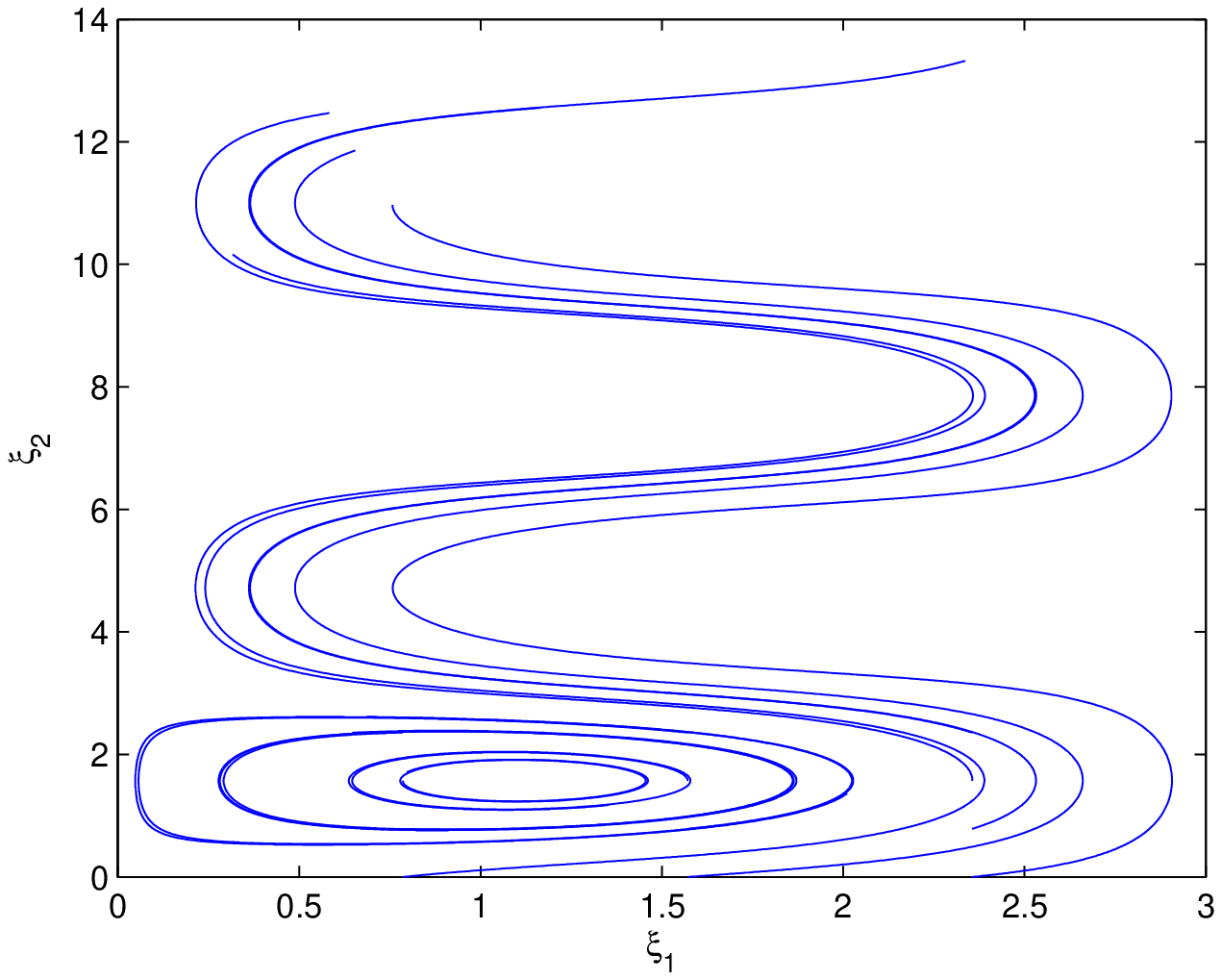}\\
\includegraphics[height=6cm]{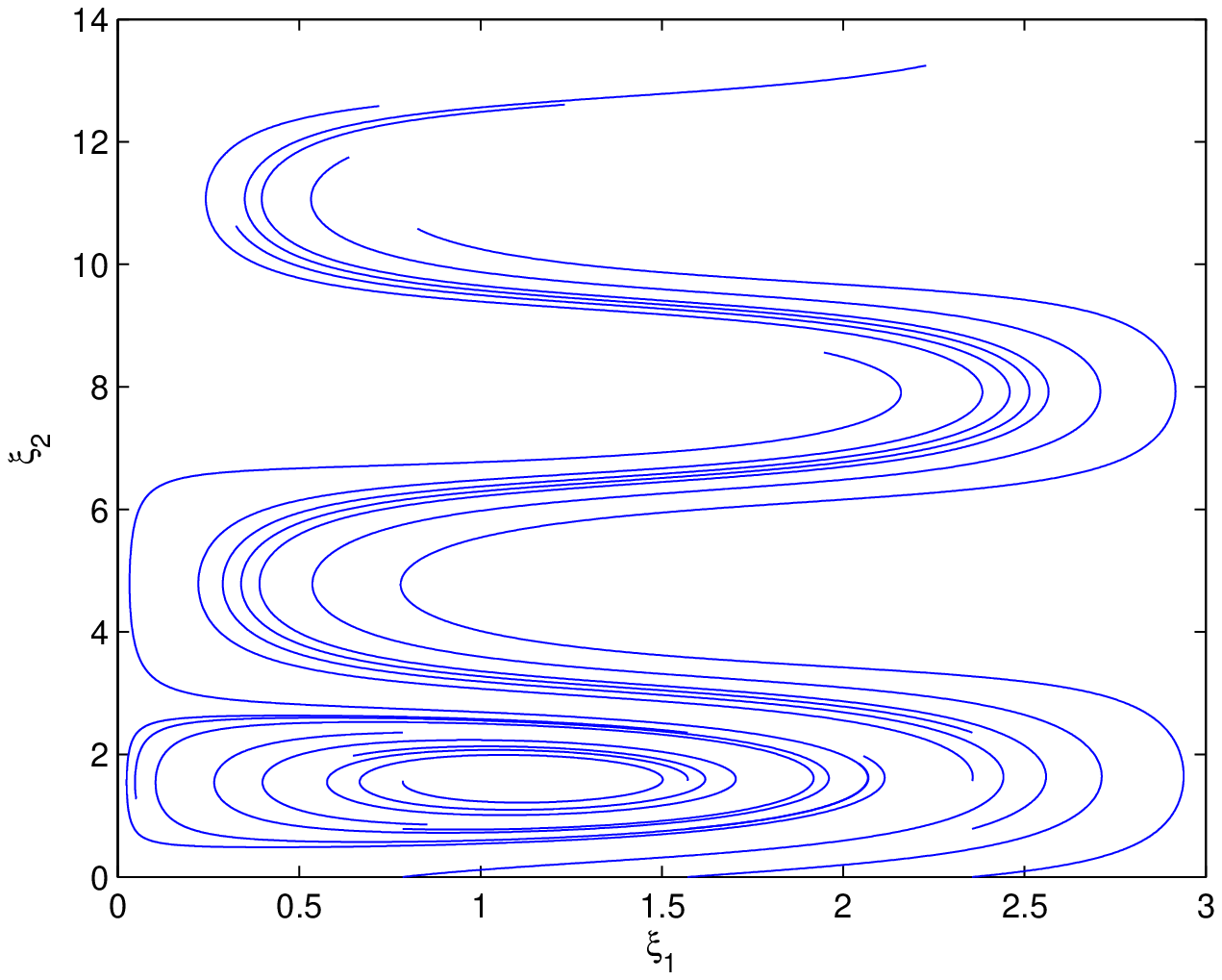}\\
\includegraphics[height=6cm]{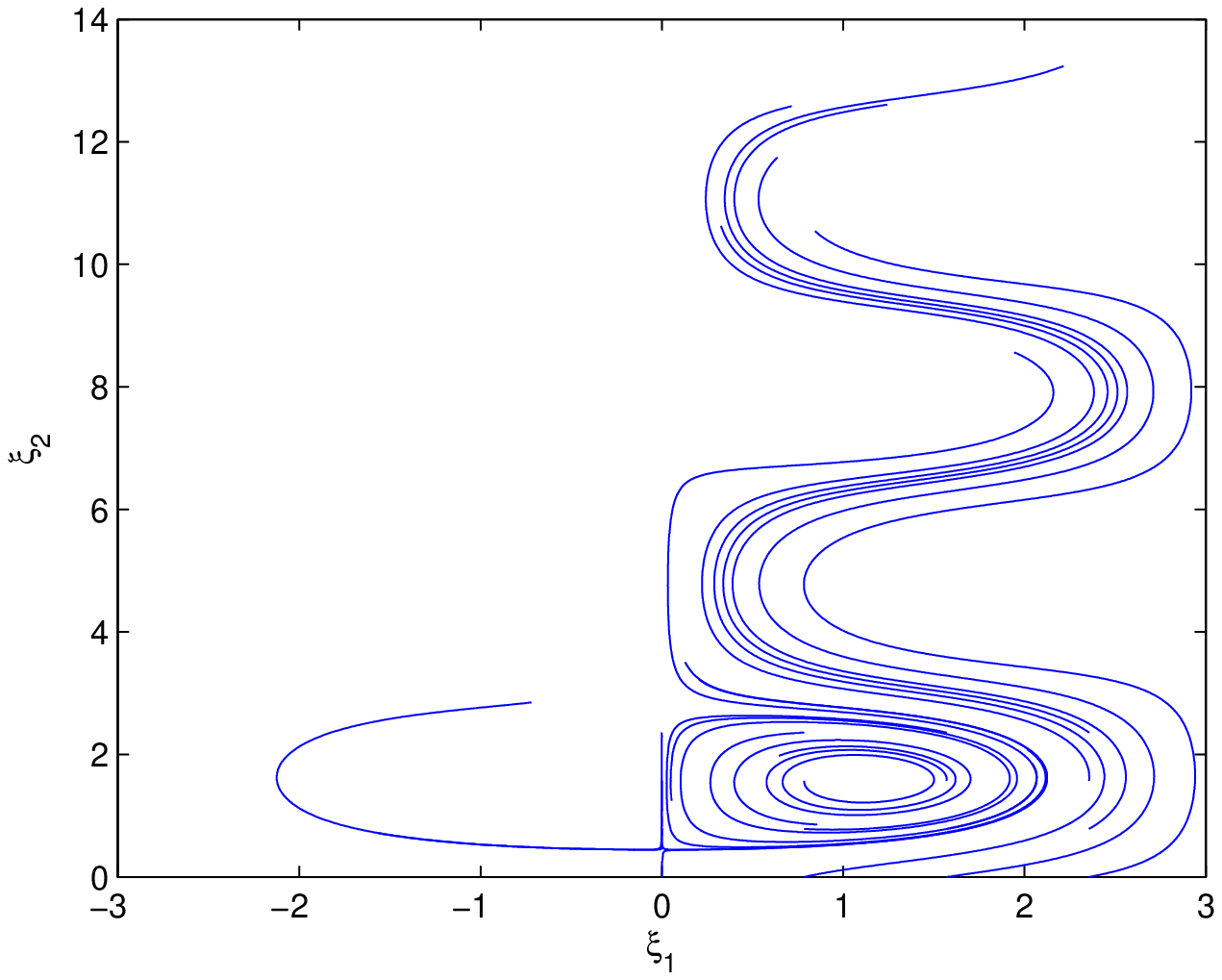}
\caption{Sample particle orbits of the slow system  \eqref{reduced1} and
\eqref{reduced2} with $V=0.3$ and $a=0.7$: $\varepsilon=0$ and
$\sigma=0$ (top, no noise); $\varepsilon=0.05$ and $\sigma=0$
(middle, no noise); $\varepsilon=0.05$ and $\sigma=0.01$ (bottom,
with noise).} \label{w03}
\end{figure}

\begin{figure}
\subfigure(a)\includegraphics[height=6cm]{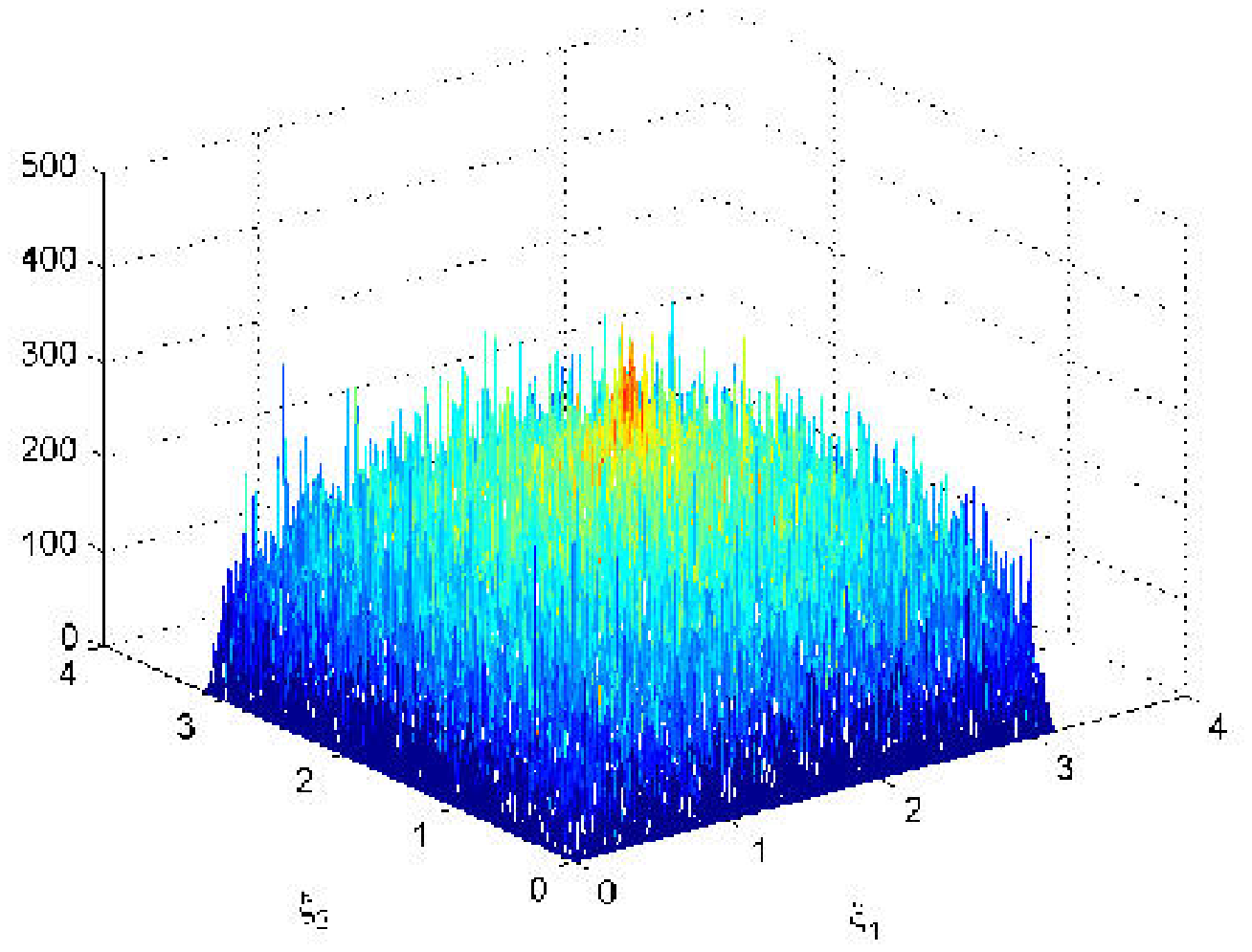}
\subfigure(b)\includegraphics[height=6cm]{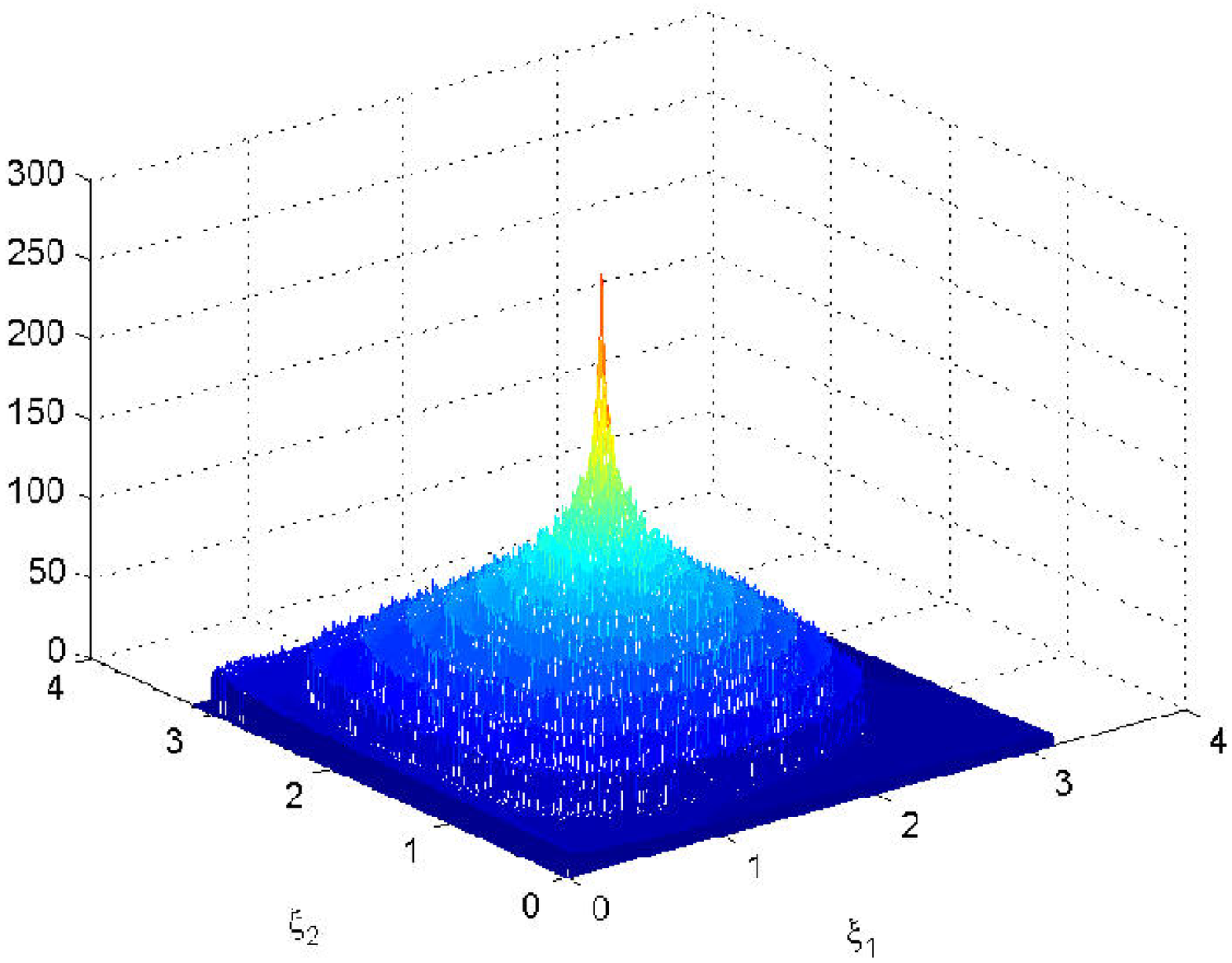}\\
\subfigure(c)\includegraphics[height=6cm]{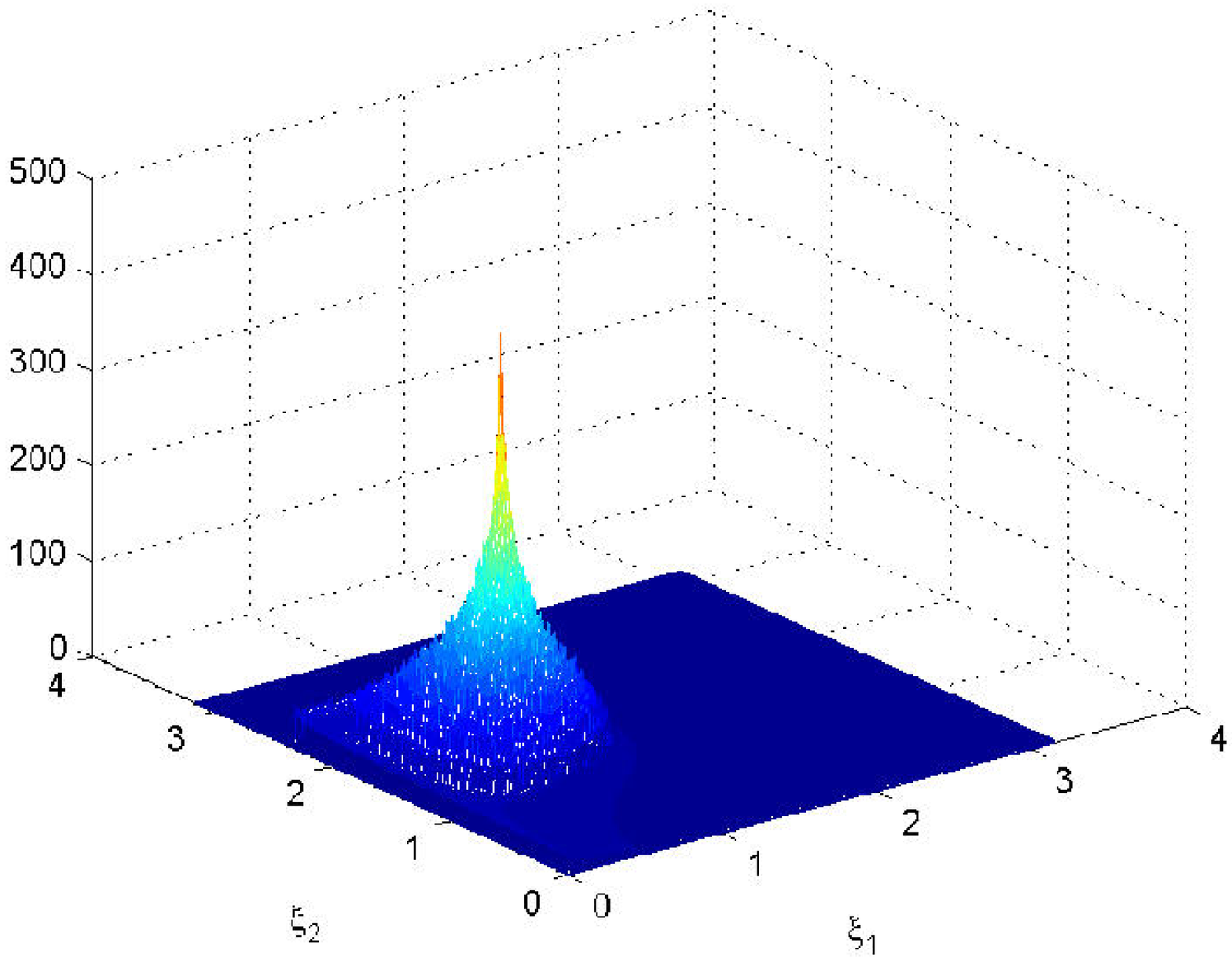}
\subfigure(d)\includegraphics[height=6cm]{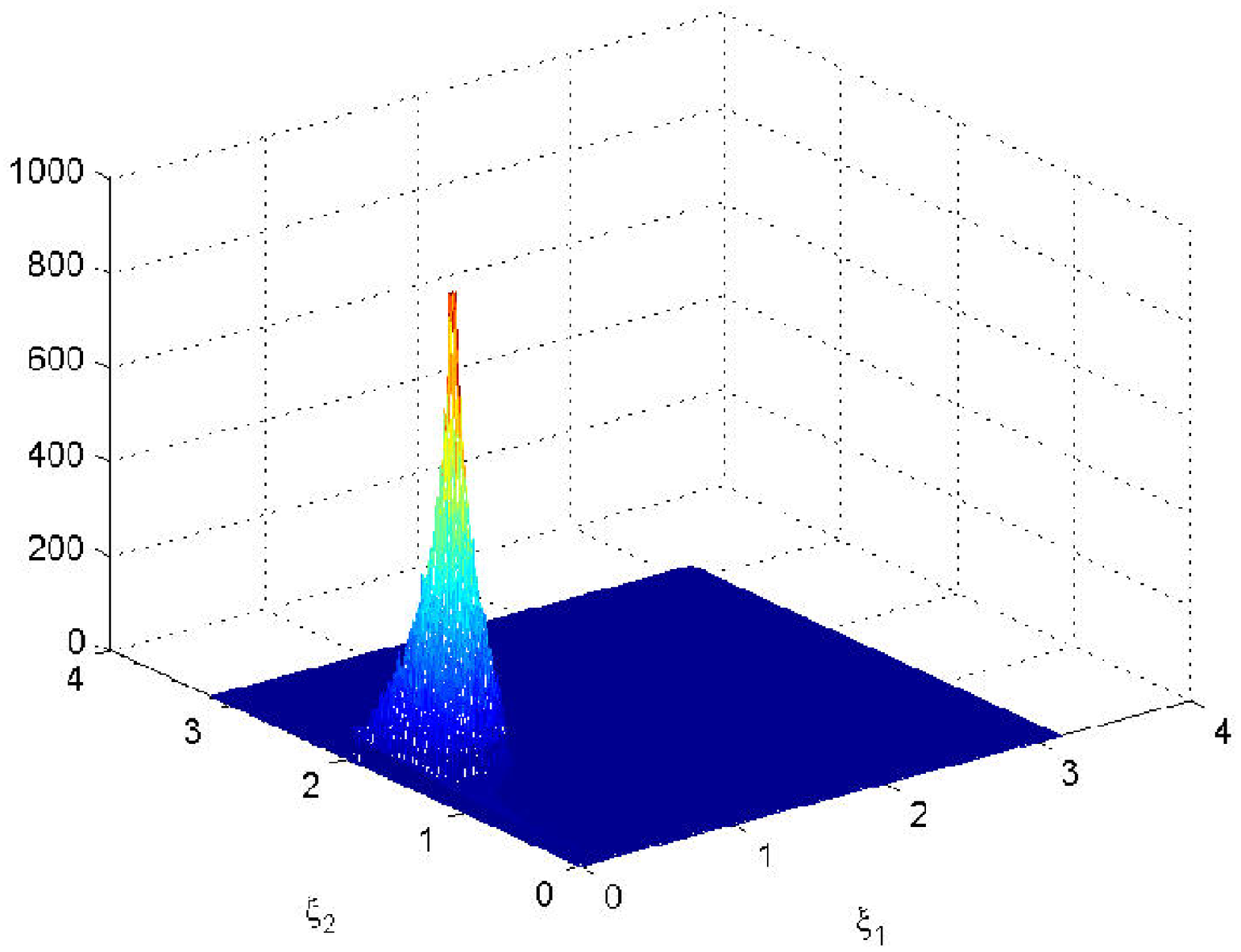}
\caption{First exit time for the slow system \eqref{reduced1} and
\eqref{reduced2} with $\varepsilon=0.05$, $a=0.7$ and $\sigma=0.01$:
(a) $V=0$, (b) $V=0.1$, (c) $V=0.5$ and (d) $V=0.65$. } \label{first
exit time}
\end{figure}

\begin{figure}
\subfigure(a)\includegraphics[height=6cm]{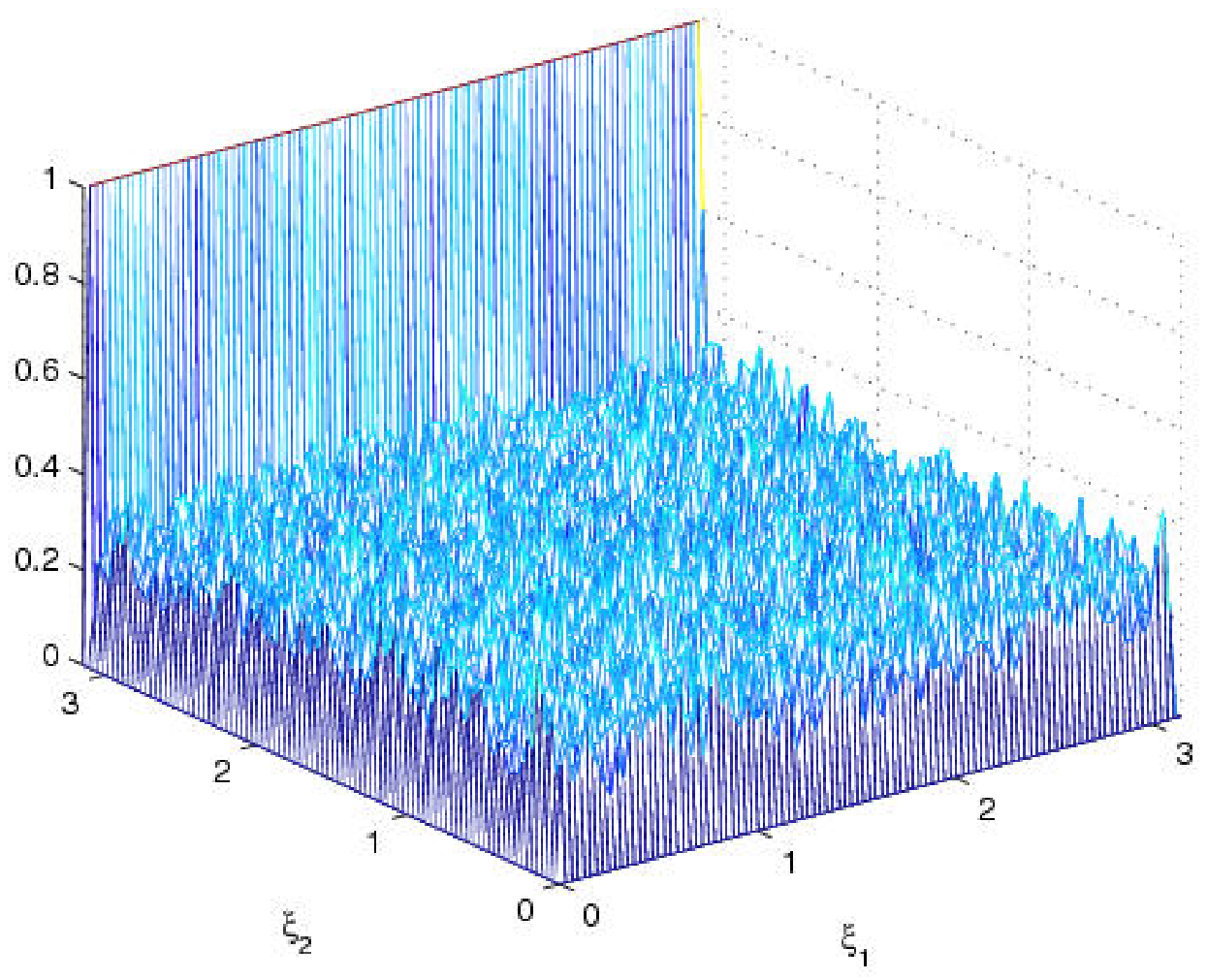}
\subfigure(b)\includegraphics[height=6cm]{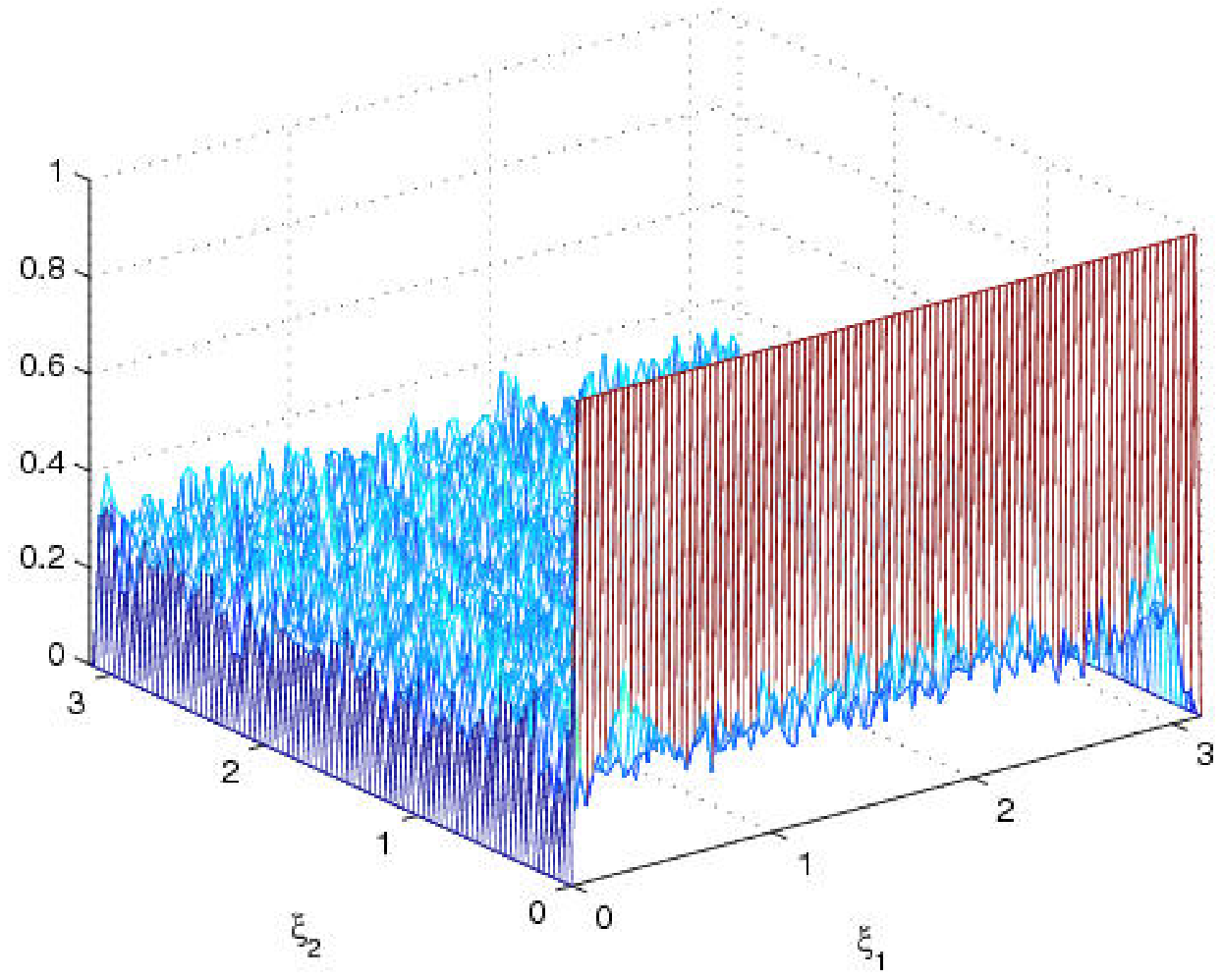}\\
\subfigure(c)\includegraphics[height=6cm]{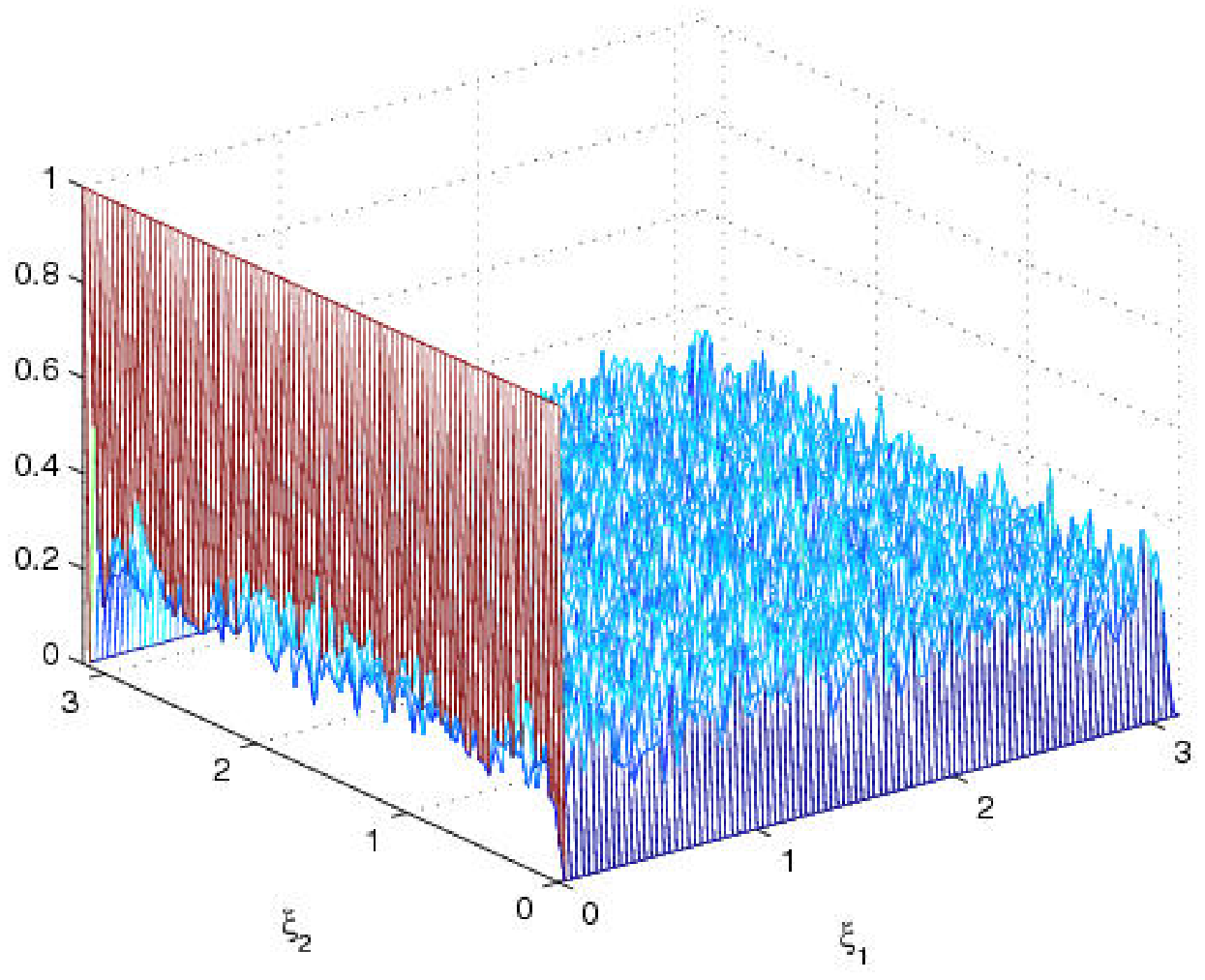}
\subfigure(d)\includegraphics[height=6cm]{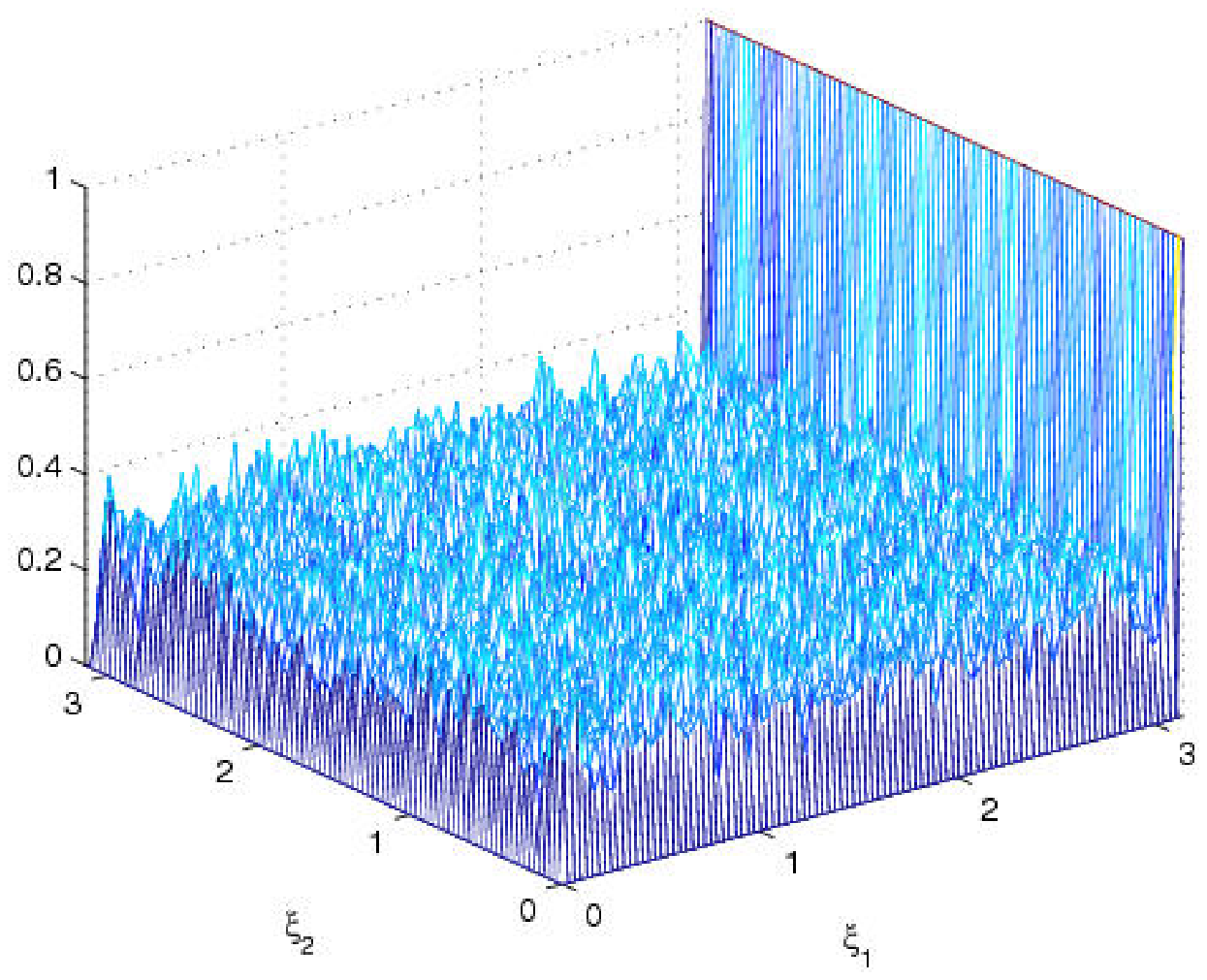}
\caption{Escape probability for the slow system \eqref{reduced1} and
\eqref{reduced2} with $V=0$, $\varepsilon=0.05$, $a=0.7$ and
$\sigma=0.01$: (a) escape through  $\xi_2=\pi$ (settling direction
or physical bottom boundary), (b) escape through $\xi_2=0$ (physical top boundary), (c)
escape through $\xi_1=0$ (left boundary), and (d) escape through
$\xi_1=\pi$ (right boundary).} \label{ep-w0}
\end{figure}

\begin{figure}
\subfigure(a)\includegraphics[height=6cm]{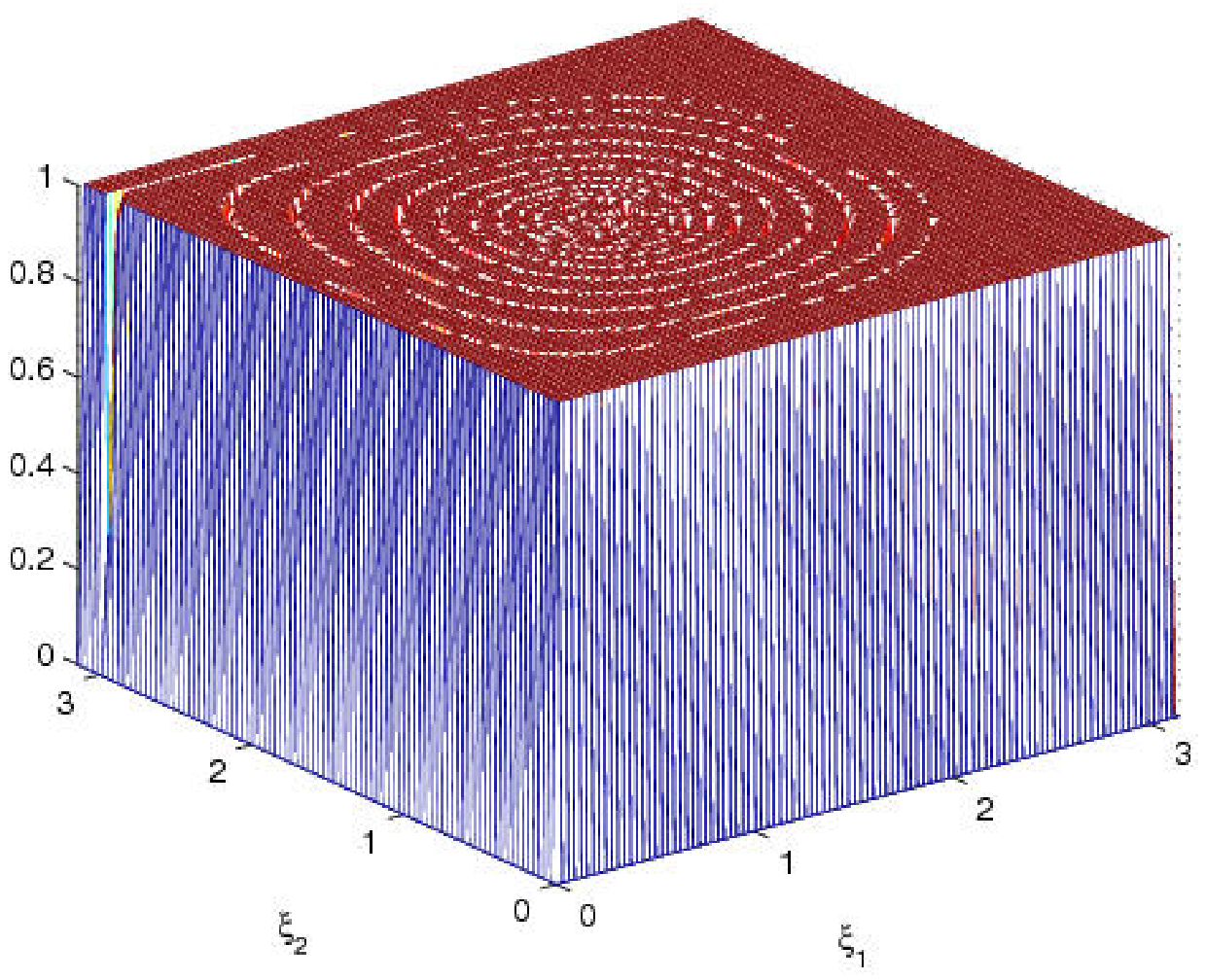}
\subfigure(b)\includegraphics[height=6cm]{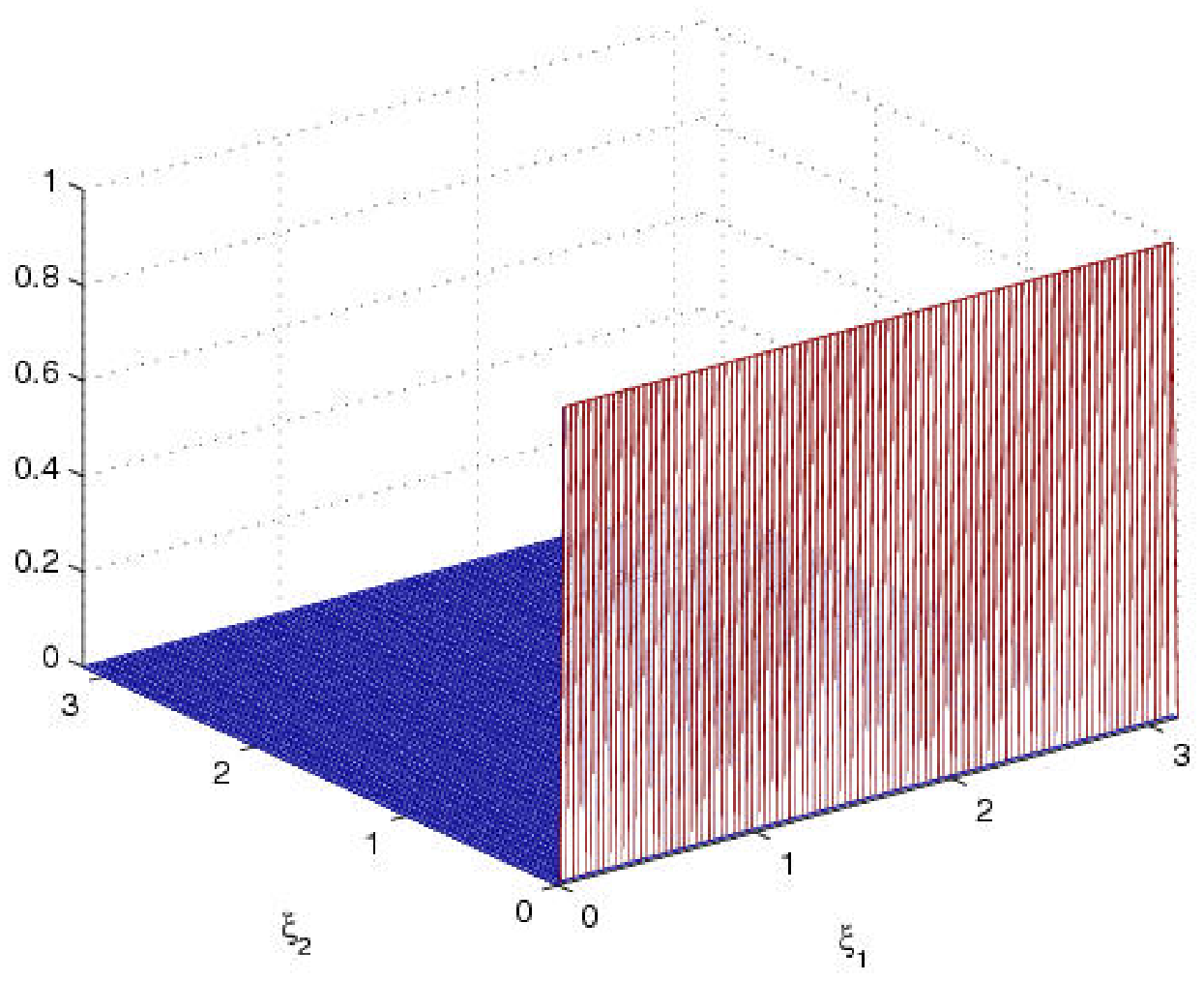}\\
\subfigure(c)\includegraphics[height=6cm]{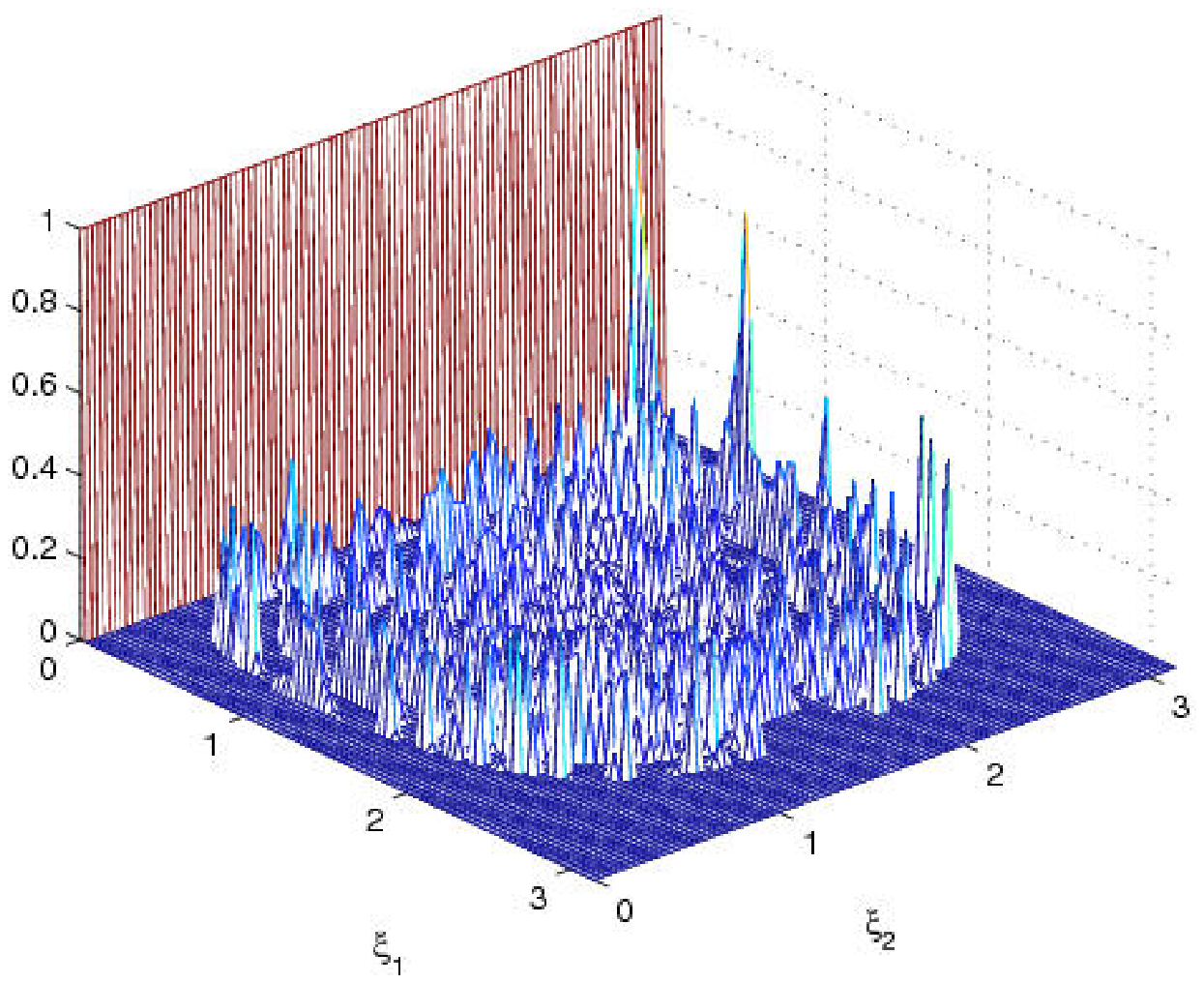}
\subfigure(d)\includegraphics[height=6cm]{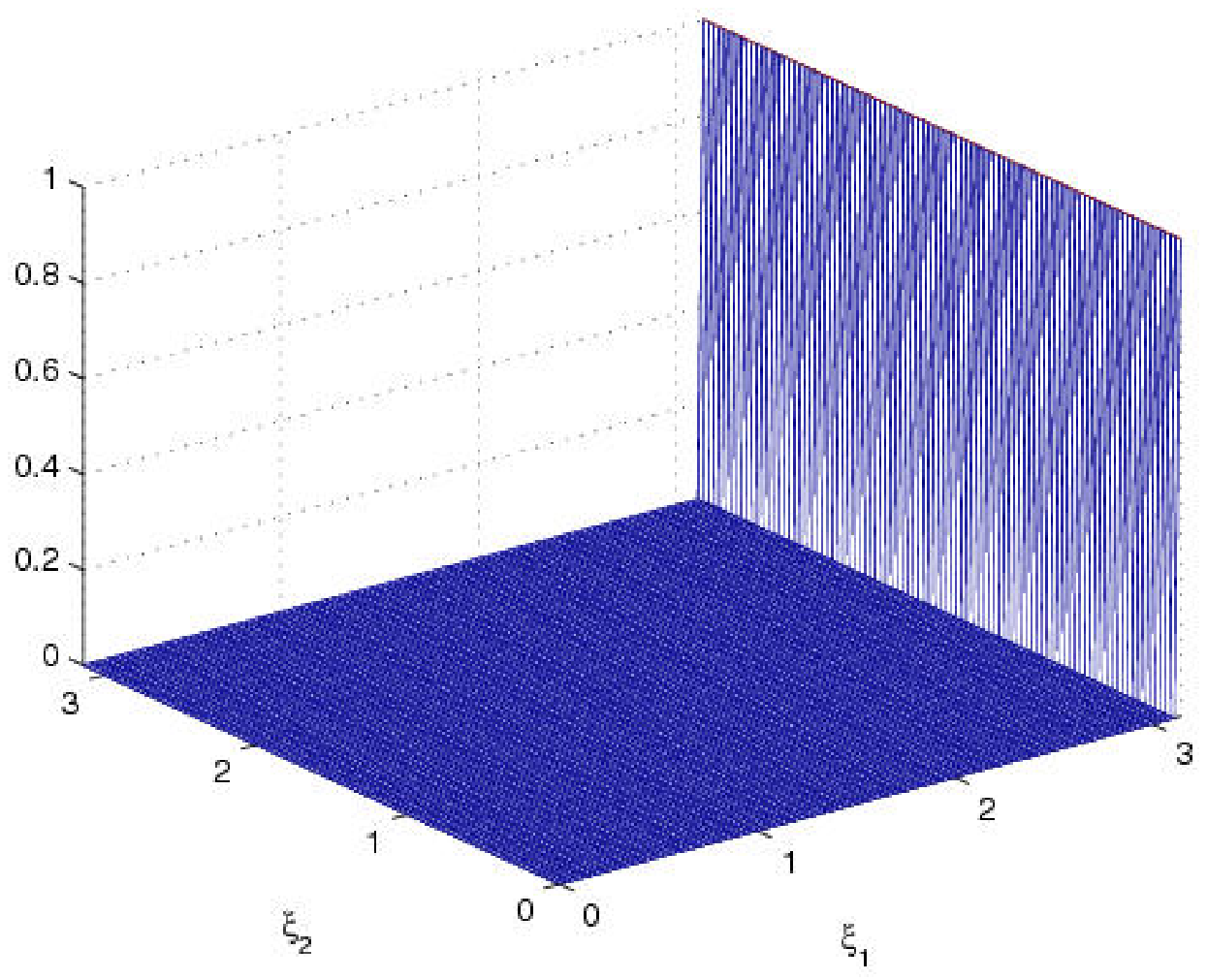}
\caption{Escape probability for the slow system \eqref{reduced1} and
\eqref{reduced2} with $V=0.1$, $\varepsilon=0.05$, $a=0.7$ and
$\sigma=0.01$: (a) Escape through $\xi_2=\pi$ (settling direction or
physical bottom boundary),  (b) escape through $\xi_2=0$ (physical top boundary), (c)
escape through $\xi_1=0$ (left boundary), and (d) escape through
$\xi_1=\pi$ (right boundary).} \label{ep-w01}
\end{figure}

\begin{figure}
\subfigure(a1)\includegraphics[height=6cm]{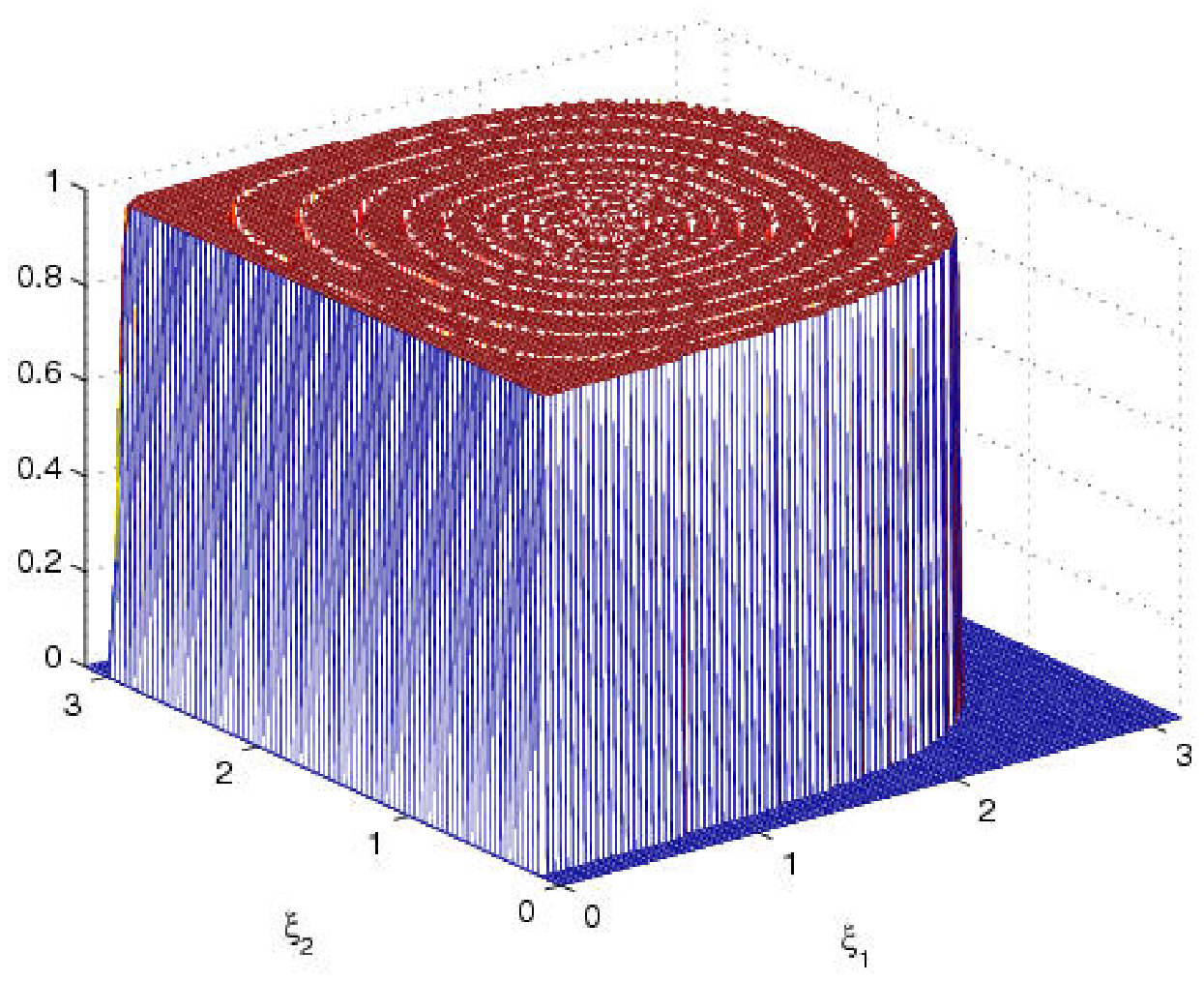}
\subfigure(a2)\includegraphics[height=6cm]{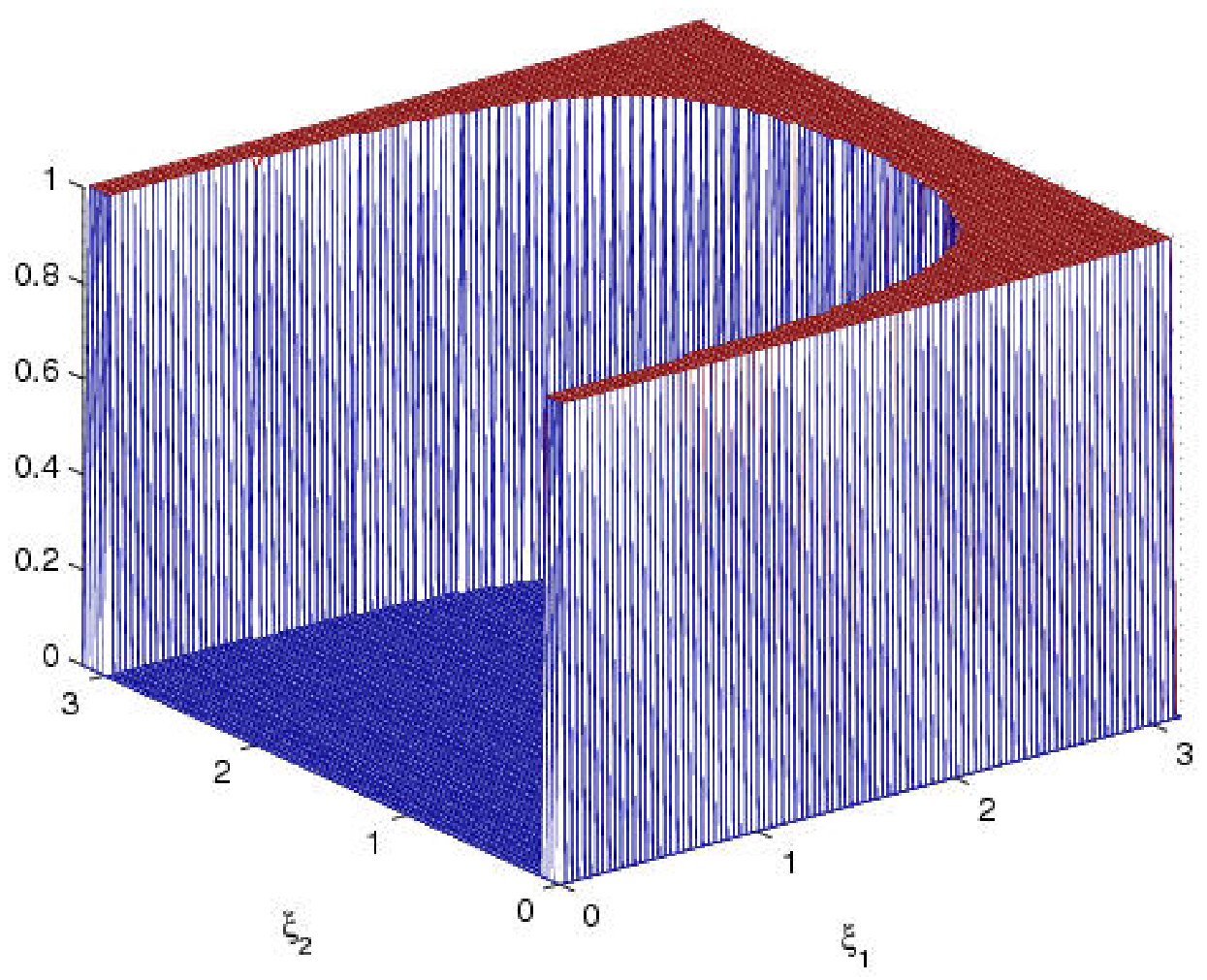}\\
\subfigure(c1)\includegraphics[height=6cm]{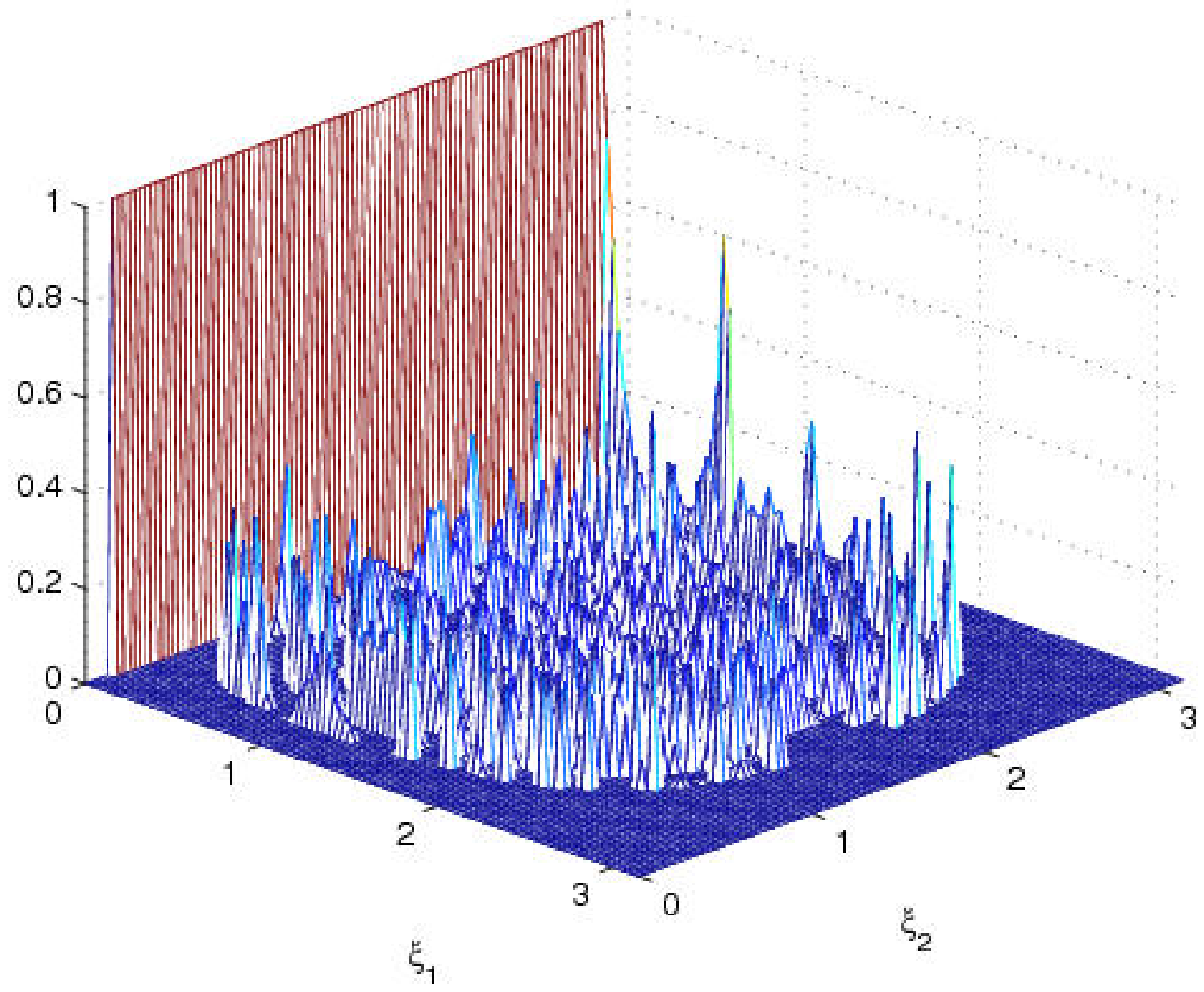}
\subfigure(c2)\includegraphics[height=6cm]{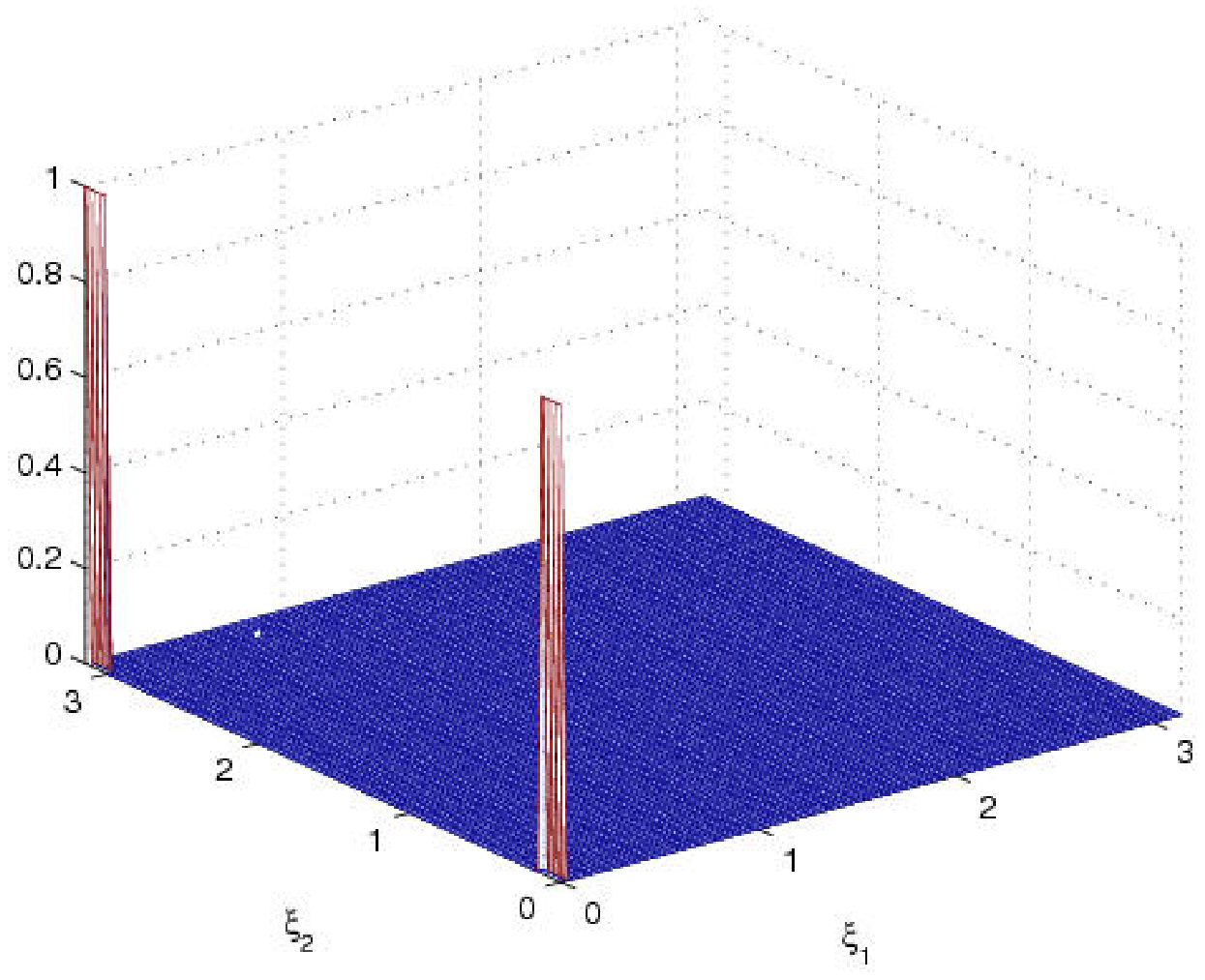}
\caption{Escape probability for the slow system \eqref{reduced1} and
\eqref{reduced2} with $V=0.1$, $\varepsilon=0.05$, $a=0.7$ and
$\sigma=0.01$: (a1) and (a2) are Figure \ref{ep-w01}(a) splitting at
the deterministic heteroclinic orbit $a\sin\xi_1\sin\xi_2-V \xi_1=0$;  (c1) and (c2) are Figure
\ref{ep-w01}(c) splitting at the same heteroclinic orbit.}
\label{ep-heter-w01}
\end{figure}

\begin{figure}\center
\includegraphics[height=7cm,width=8cm]{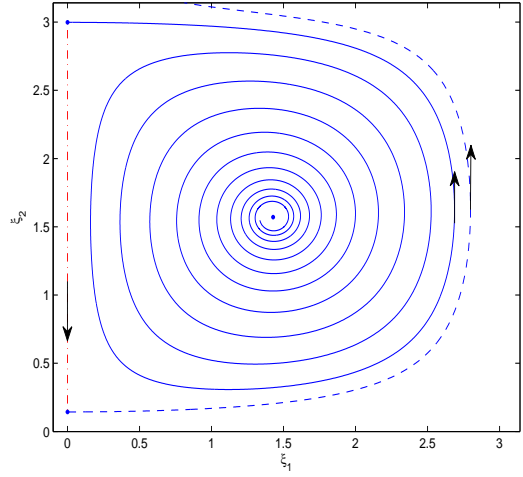}
%\subfigure(b)\includegraphics[height=7cm,width=8cm]{v01eps005sigma001a07stable-unstable.eps}
\caption{Deterministic stable manifold $W^s(0, \pi-\sin^{-1}(\frac{V}{a}))$ (Blue solid curve) and unstable manifold $W^u(0,
\sin^{-1}(\frac{V}{a}))$ (Blue dashed curve) for the slow system
\eqref{reduced1} and \eqref{reduced2}: $V=0.1$,
$\varepsilon=0.05$, $a=0.7$,  and $\sigma=0$ (no noise). Two equilibrium points are
$(0, \pi-\sin^{-1}(\frac{V}{a}))$ and $(0,
\sin^{-1}(\frac{V}{a}))$.}
% and $(b)$ $\sigma=0.01$ (the case with noise).}
\label{stable-unstable manifolds v01}
\end{figure}

%%%%%%%%%%%%%%%%%%%%%%%%%%%%%%%%%%
%%%%%%%%%%%%%%%%%%%%%%%%%%%%%%%%%%%%%%%%%%%%%%
\begin{figure}\center
\includegraphics[height=8cm,width=12cm]{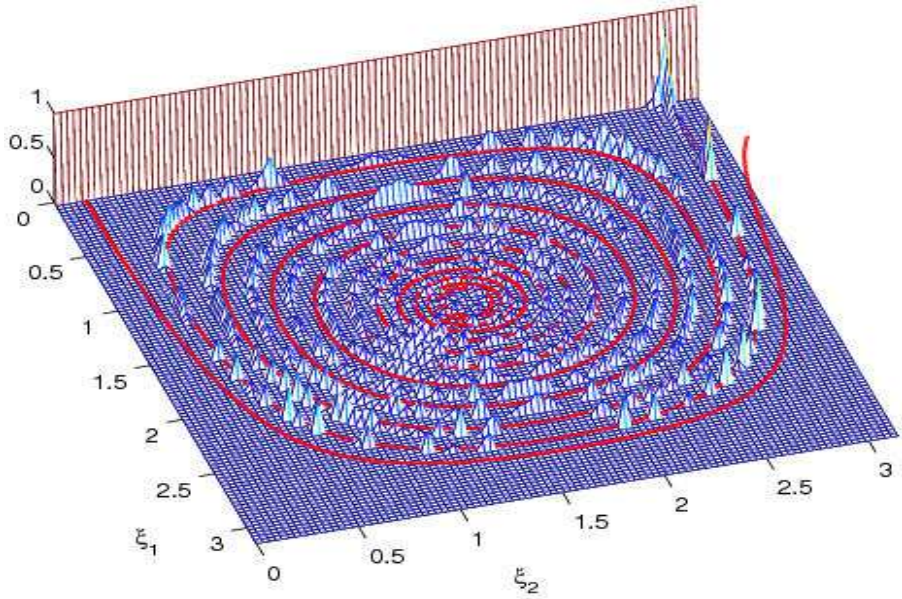}
\caption{Escape probability (Light blue color) for particles through the left boundary  for the slow system
\eqref{reduced1} and \eqref{reduced2} with  $V=0.1$,
$\varepsilon=0.05$, $a=0.7$ and $\sigma=0.01$ (with noise), together with deterministic stable manifold $W^s(0,
\pi-\sin^{-1}(\frac{V}{a}))$ and
unstable manifold $W^u(0, \sin^{-1}(\frac{V}{a}))$ (Red curves).}
\label{plot3 + mesh v01}
\end{figure}

\begin{figure}\center
\includegraphics[height=8cm,width=12cm]{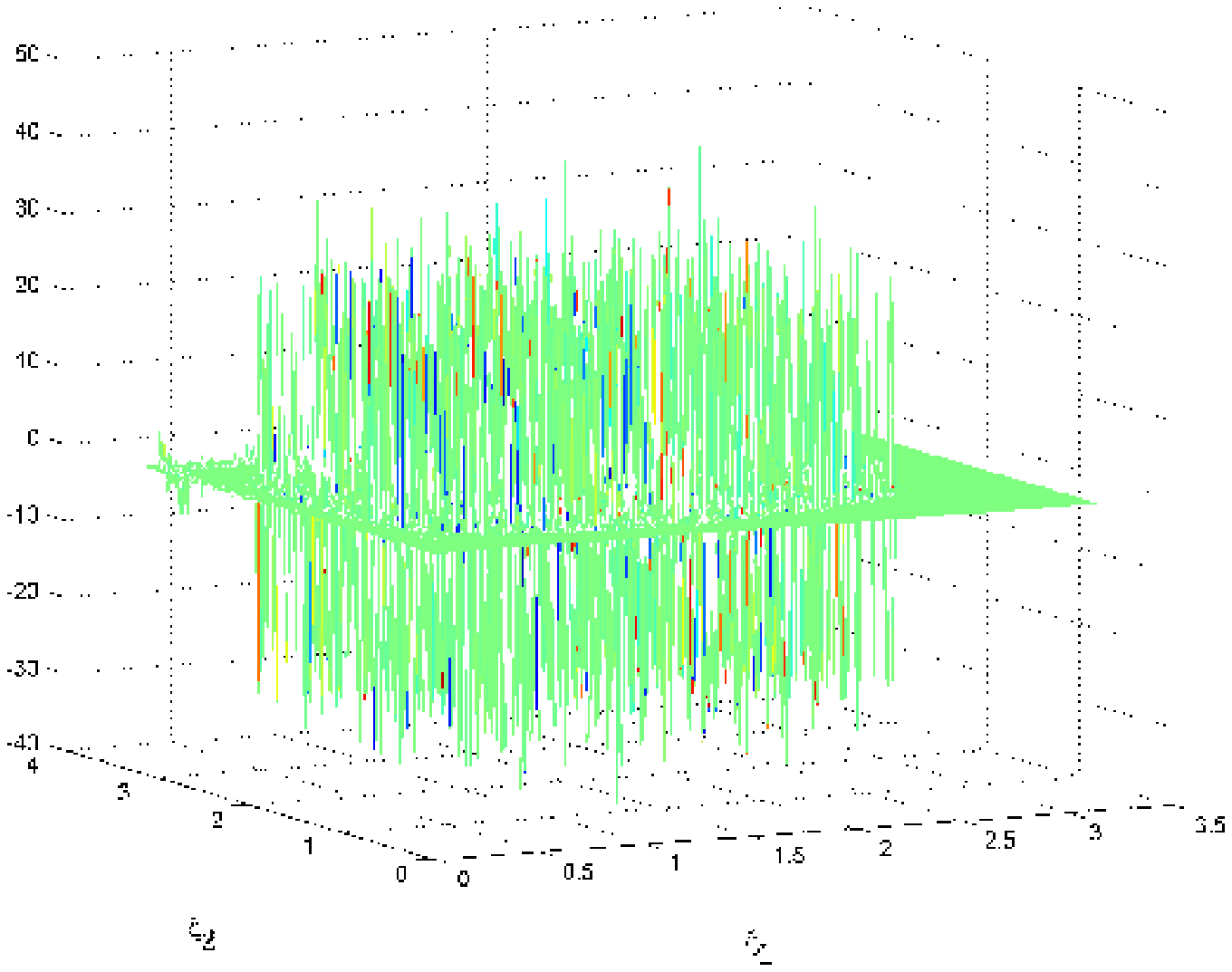}
\caption{Difference between the particle settling times for the deterministic case ($\sigma=0$) and the random case ($\sigma=0.01$)   of the slow system
\eqref{reduced1} and \eqref{reduced2}:  $V=0.1$,
$\varepsilon=0.05$ and $a=0.7$.}
\label{difference}
\end{figure}

\subsection{Deterministic Case}
We recall the results, by one of the present authors, of the deterministic motion of aerosol particles in a cellular flow \cite{Rubin}.

When the settling velocity in still fluid is zero, i.e.,  $V=0$ (and
also $\sigma=0$), particles are trapped in the cell either in
circular motion (with no inertial, $\varepsilon=0$) or spiralling
motion (with inertial, $\varepsilon>0$), as shown in Figure \ref{w0}
(top) and (bottom), respectively.

When the settling
velocity in still fluid is non-zero, i.e.,  $V>0$, in the   case  with inertial absent
($\varepsilon=0$) and noise absent ($\sigma=0$), the particles in the area surrounded
by the heteroclinic orbit $a \sin\xi_1\sin\xi_2-V \xi_1=0$ connecting the equilibrium points
    $(0, \sin^{-1}(V/a))$ and $(0, \pi-\sin^{-1}(V/a))$, are
trapped inside it,   with the equilibrium point $(\cos^{-1}(V/a),
\pi/2)$ as a center. But the particles in the remaining area settle
to the bottom of the fluid. With an arbitrarily small inertial
effect ($0<\varepsilon\ll1$ and also $\sigma=0$), the heteroclinic
orbit breaks and it leads to the settling of almost all particles,
with the equilibrium point $(\cos^{-1}(V/a)), \pi/2)$ becoming an
unstable spiral. Figure \ref{stable-unstable manifolds v01}  is the
stable manifold $W^s(0, \pi-\sin^{-1}(V/a))$  (Blue solid curve) and
unstable manifold $W^u(0, \sin^{-1}(V/a))$ (Blue dashed curve) when
inertial presents  ($\varepsilon=0.05$ and also $\sigma=0$).

%Obviously, no crossing point, i.e. the heteroclinic orbit breaks.
%Meanwhile, the stable manifold was stuck between $\xi_2$ axis and
%the unstable manifold, the settling of the unstable manifold %leads
%the settling of the particles in the cell. The red dash dot line
%says that the particles $(0, \xi_2)$, with %$\xi_2\in(\sin^{-1}(V/a),
%\pi-\sin^{-1}(V/a))$, will at last stay at the point $(0,
%\sin^{-1}(V/a))$. Therefore, all the particles in the cell settle.

\subsection{Stochastic Case}

For zero settling velocity in still fluid ($V=0$),   we note that all particles are trapped in a fluid cell $D=(0, \pi) \times (0, \pi)$  when noise is absent. Figure \ref{w0} shows  the  particle orbits in this case.

\bigskip

For non-zero settling velocity in still fluid ($V \neq 0$),
when noise is absent ($\sigma =0$ ), all particles settle to the fluid bottom; see
  Figure \ref{w03} (top, middle).
But when noise is present ($\sigma \neq 0$), some particles exit the
cell not only by settling. Figure \ref{w03} (bottom)  shows that,
with small noise ($\sigma=0.01$), some particles  indeed     exit
the cell from the vertical side boundary $\xi_1=0$.

%Figures \ref{t-w0} quantifies the first exit time. Under the influence of noise,
%particles take a short, finite  time to exit the cell. through the bottom $\xi_2=\pi$ or the vertical axis

In fact, when noise is present, all particles will exit, no matter
the settling velocity in still fluid  $V$ is zero or non-zero.
Figure \ref{first exit time} indicates that with noise, particles
will all exit from a fluid cell in  finite time, almost surely. In
the following we only consider the case with noise.

% However, , some particles take
%shorter time   while some others need longer time to exit the cell.
%These particles appear to be closer to the horizontal side boundary
%$\xi_1=0$ for larger $V$.

%faster to exit while the particles a little far away from
%$\xi_1=0$ need a longer time to exit. But the difference is not big.

\bigskip

Figures \ref{ep-w0} --- \ref{ep-w01} plot the escape probability
through four side boundaries, for zero or non-zero $V$ (settling
velocity in still fluid) values. When a particle reaches or is on a
side boundary, it is regarded as `having escaped through' that part
of the boundary. In other words, particles on a side boundary have
escape probability $1$ (you see this in these figures). When $V=0$,
the particles escape the cell through each of  the four side
boundaries with similar or equal likelihood (Figure \ref{ep-w0}), as
there is no preferred direction for particles due to zero settling
velocity $V=0$ (in still fluid). With non-zero $V$, particles almost
surely do not escape through the right side  boundary $\xi_1=\pi$.
 In fact, the inertial particles either  settle to the physical bottom or
exit from the left side boundary $\xi_1=0$.   Figure \ref{ep-w01}
displays the escape probability for  $V=0.1$, through each of the
four side boundaries of the fluid cell. Although most particles
settle (Figure \ref{ep-w01} (a)), some particles escape the fluid
cell through the left side boundary (Figure \ref{ep-w01} (c)). See
Figure \ref{ep-heter-w01} for a split view of this phenomenon.

To examine this phenomenon more carefully, we draw the stable
manifold $W^s(0, \pi-\sin^{-1}(V/a))$   and unstable manifold
$W^u(0, \sin^{-1}(V/a))$   for the deterministic system ($\sigma=0$)
in Figure \ref{stable-unstable manifolds v01}. As shown in Figure
\ref{plot3 + mesh v01}, inertial particles with significant
likelihood  of escaping through the left side boundary are near or
on the stable manifold $W^s(0, \pi-\sin^{-1}(V/a))$. In other words,
some (but not all) inertial particles  near or on this stable
manifold  are resistent to settling in the stochastic case. This
resistance is quantified by the escape probability for a particle to
get out of the fluid cell through the left side boundary. More
specifically, the difference between the inertial particle settling
times for deterministic case ($\sigma=0$) and a  random case
($\sigma=0.01$) is shown in Figure \ref{difference}. We observe that
the inertial particles near or on the stable manifold $W^s(0,
\pi-\sin^{-1}(V/a))$ could have either a longer or shorter settling
time, compared with the deterministic case. This indicates that the
noise could either delay or enhance the settling (although we do not
know the reason), and the stable manifold is an agent  facilitating
this behavior. However, the overall impact of noise appears to delay
the settling, as the averaged difference over the cell $(0, \pi)
\times (0, \pi)$ is $ -0.0133$ for noise intensity $\sigma=0.01$,
while this averaged value is $-0.1168$ for a stronger noise with
$\sigma=0.1$.

\subsection{Conclusions}

Let the settling velocity in still fluid be non-zero (i.e., $V \neq 0$).   \\
(i) In the
classical case (no inertial: $\varepsilon=0$ and no noise: $ \sigma=0$),  the particles surrounded inside a heteroclinic orbit are trapped inside it and all the
other particles settle to the bottom of this cellular fluid flow. \\
(ii) In the case with only small inertia influence
($0<\varepsilon \ll 1, \sigma=0$), the heteroclinic orbit breaks up to form a stable manifold $W^s$ and an unstable manifold $W^u$, the trapped
particles then settle, i.e., all   inertial particles settle. \\
(iii) However, when the noise is present ($0<\varepsilon \ll 1, \sigma \neq 0$), although most inertial particles still settle, some particles near or on the deterministic stable manifold $W^s$ escape the fluid cell  through the left side boundary, with non-negligible likelihood. Thus, inertial particle motions occur in two adjacent fluid cells in   random cases, but confine in single cells in the deterministic case.

In fact, noise could either delay settling for some particles or enhance settling for others, and the deterministic stable manifold is an agent to facilitate this phenomenon. Overall, noise appears to delay the settling in an averaged sense.

%%%%%%%%%%%%%%%%%%%%%%%%%%%%%%%%%%%%%%%%%%%%%%%%%%%%%%%%%%%%%%%%%


\begin{thebibliography}{99}
%%%%%%%%%%%%%%%%%%%%%%%%%%%%%%%%%%%%%%%%%%%%%%%%%%%%%%%%%%%%%%%%%


\bibitem{Arnold}
L. Arnold,  \emph{ Random Dynamical Systems.}
 Springer-Verlag, New York, 1998.

\bibitem{Beven} K. J. Beven, P. C. Chatwin and J. H. Millbank,
 {\em Mixing and Transport in the Environment}, John Wiley \& Sons,
 New York, 1994.

\bibitem{Gentz} N. Berglund and B. Gentz,
\emph{ Noise-Induced   Phenomena in Slow-Fast Dynamical Systems},
Springer-Verlag, 2006.



%\bibitem{Blomker} D. Bl$\ddot{o}$mker and M. Haire, Amplitude
%equations for SPDEs: Approximate center manifolds and invariant
%measures. {\em Probability and partial differential equations in
%modern applied mathematics}, Springer, 2005.

%\bibitem{Boxler} P. Boxler, A stochastic version of center manifold
%theory. \emph{ Probab. Theory Related Fields}, \textbf{83} (1989),
%509-545.

%\bibitem{Boxler2}
%P. Boxler,   How to construct stochastic center manifolds on the %level of vector fields. \emph{Lecture Notes in Math.} \textbf{1486} %(1991), 141-158.

%\bibitem{Burns}
%T. J. BURNS, R. W. DAVIS and E. F. MOORE. Journal of Fluid
%Mechanics, Volume 384, April 1999, pp 1-26.

%\bibitem{Chen} X. Chen, A. J. Roberts and J. Duan, Center manifolds
%for stochastic evolution equations, preprint, 2012.

\bibitem{Clark} M. M. Clark, {\em Transport Modeling for
Environmental Engineers and Scientists}, John Wiley and Sons, New
York, 1996.


\bibitem{Duan}
J. Duan, K. Lu and B. Schmalfuss, Smooth stable
and unstable manifolds for stochastic evolutionary equations, \emph{J. Dynam, Differential Equations}, \textbf{16} (2004), 949-972.

\bibitem{Freidlin} M. I. Freidlin and A. D. Wentzell,
\emph{Random Perturbations of Dynamical Systems}, Springer-Verlag,
2nd Edition, 1998, Chapter 7.

\bibitem{FuLiuDuan} H. Fu, X. Liu and J. Duan,
Slow manifolds for multi-time-scale stochastic evolutionary systems. \emph{Comm. Math. Sci.}, 2013,  Vol. \textbf{11}, No. 1, pp. 141-162.

\bibitem{Haller} G. Haller and T. Sapsis,
Where do inertial particles go in fluid flows?
\emph{Phys. D } \textbf{237} (2008), 573-583.


\bibitem{Jones} C. K. R. T. Jones,
 Geometric singular perturbation theory, \emph{Lecture Notes in Math}, \textbf{1609} (1995), 44-118.


\bibitem{Kabanov} Y. Kabanov and S. Pergamenshchikov,
 \emph{Two-scale stochastic systems: asymptotic analysis and control}.
Springer-Verlag, New York, 2003.


%\bibitem{LuSchm} K. Lu and B. Schmalfuss, Invariant foliations for stochastic partial differential equations.
%\emph{Stochastics and Dynamics}  \textbf{8}(2008), 505-518.



%\bibitem{Oksendal} B. Oksendal,
%  {\em Stochastic Differenntial Equations.} Sixth Ed.,
%  Springer-Verlag, New York, 2003.
%\bibitem{Pavliotis} Grigorios A.Pavliotis and Andrew M.Stuart,
%{\em Multiscale Method Averaging and Homogenization}. Springer,
%2007.

%\bibitem{Pav} G. A. Pavliotis, A. M. Stuart and K. C. Zygalakis,
%Homogenization for inertial particles in a random flow.  Comm. Math. Sci. 5(2007), 507--531.


% \bibitem{Robinson}
% J. C. Robinson,   Stability of random attractors under perturbation
% and approximation.{\em  J. Diff. Eqns.} 186, 652-669. 2002.

\bibitem{Rubin}
J. Rubin, C. K. R. T. Jones and M. Maxey,  Settling and Asymptotics of
Aerosol Particles in a Cellular Flow Field,  \emph{J. Nonlinear
Sci.} \textbf{5} (1995), 337-358.



%\bibitem{Sato}
%K.-I. Sato,
%\newblock {\em {L{\'e}vy Processes and Infinitely Divisible Distributions}},
%\newblock {Cambridge University Press}, Cambridge, 1999.

 \bibitem{Schm} B. Schmalfuss and K. R. Schneider, Invriant manifolds for random dynamical systems with slow and fast
 variables. \emph{J. Dyn. Diff. Eqns.} \textbf{20} (2008), 133-164.



\bibitem{Schnoor} J. L. Schnoor, {\em Environmental Modeling: Fate
and Transport of Pollutants in Water, Air and Soil}, John Wiley and
Sons, New York, 1996.




\bibitem{Sun} X. Sun, J. Duan and X. Li,
An impact of noise on invariant manifolds in nonlinear dynamical
systems. \emph{J. Math. Phys.}, \textbf{51} (2010),  042702.


%\bibitem{Wang} W. Wang and A. J. Roberts, Slow manifolds and %averaging for slow-fast stochastic systems. %arXiv:0903.1375v1 [math-ph], 2012.

\bibitem{Wang}
W. Wang, A. J. Roberts and J. Duan,  Large deviations and
approximations for slow fast stochastic reaction diffusion
equations. \emph{J. Differential Equations} \textbf{253} (2012), no.
12, 3501-3522.

\bibitem{XuRoberts} C. Xu and A.~J. Roberts,
On the low-dimensional modelling of  Stratonovich  stochastic differential equations. \emph{Physica~A}, \textbf{225}:62--80, 1996.

%\bibitem{Antonios} A. Zagaris, H. G. Kaper, and T. J. Kaper, Fast and %slow dynamics for the computational singular perturbation method. %\emph{Multiscale Modeling and Simulation}, \textbf{2} (2004), 613-638.

\end{thebibliography}
\end{document}